\newtheorem*{theorem*}{Theorem}
\newtheorem{lemma}{Lemma}[section]
\newtheorem{proposition}[lemma]{Proposition}
\theoremstyle{definition}
\newtheorem{definition}[lemma]{Definition}
\theoremstyle{remark}
\newcommand{\real} {\mathbb{R}}
\newcommand{\R}{{\bf\sf R}}
\newcommand{\subR}{{\bf\sf Q}}
\newcommand{\K}{{\bf\sf K}}
\newcommand{\dist}{\mbox{dist}}
\newcommand{\diam}{\mbox{diam}}
\title[Approximation of measures with non-zero Lyapunov exponents]{Uniform hyperbolic approximations \\ of measures with non-zero Lyapunov exponents}
\author{Stefano Luzzatto} 
\address{
Abdus Salam International Centre for Theoretical Physics, Strada Costiera 11, 34151 Trieste, Italy
}
\email{luzzatto@ictp.it}
\author{ 
Fernando J S\'anchez-Salas}
\address{
Departamento de Matem\'aticas, Facultad Experimental de Ciencias, Universidad del Zulia, Avenida Universidad, Edificio Grano de Oro, Maracaibo, Venezuela
}
\email{fjss@fec.luz.edu.ve}
\date{24 November 2011}
\subjclass[2000]{37D25}
\keywords{Nonuniformly hyperbolic systems, uniformly hyperbolic systems, approximation of hyperbolic measures}
\thanks{Most of this work was carried out at the Abdus Salam International Centre for Theoretical Physics (ICTP). The second named author was partially supported by the Associateship Programme of ICTP}
\begin{document}
\begin{abstract}
{\em We show that for any \( C^{1+\alpha} \) diffeomorphism of a compact Riemannian manifold, every non-atomic, ergodic, invariant probability measure with nonzero Lyapunov exponents is approximated by uniformly hyperbolic sets in the sense that 
there exists a sequence \( \Omega_{n} \) of compact, topologically transitive, locally maximal, uniformly hyperbolic sets, such that  for {any} sequence \( \{\mu_{n}\} \) 
of \( f \)-invariant ergodic probability measures with \( supp (\mu_{n}) \subseteq \Omega_{n} \) we have \( \mu_{n}\to \mu \) in the weak-\( * \) topology. 
}
\end{abstract}

\maketitle

\section{Introduction}

Let \( M \) be a compact Riemannian manifold,  let \( f: M \to M  \) be a \( C^{1+\alpha} \) diffeomorphism and let \( \mu \) be an \( f \)-invariant ergodic probability measure.  Then, by Oseledec's Theorem \cite[Theorem S.2.9]{katok.mendoza}, the Lyapunov exponents 
\begin{equation}\label{lyapunov.exponents}
\chi(x,v):=\lim_{n\to \pm\infty} \frac 1n \log \|Df^{n}_{x}(v)\|. 
\end{equation}
are well defined for \( \mu \) almost every \( x\in M \) and for every non-zero vector \( v\in T_{x}M \). 

\begin{definition}
\( \mu \) is \emph{hyperbolic} if \( \chi(x,v)\neq 0 \) for \( \mu \) a.e.   \( x\in M \) and every \( v\in T_{x}M\setminus \{0\} \).
\end{definition}

This notion of hyperbolicity was introduced by Pesin \cite{pesin}, see also \cite{HasKat95, pesin.barreira},  as a far-reaching generalization of the well-known notion of \emph{uniform hyperbolicity} \cite{HasKat95, Has02, New80, Rob96}; it implies that \( \mu \) almost every point has a decomposition of the 
tangent space into subspaces on which vectors exhibit exponential contraction or expansion under iteration by the derivative map \( Df \). Crucially however, and in contrast with the uniformly hyperbolic situation, 
this decomposition is only measurable as opposed to continuous, and the expanding or contracting behaviour is only asymptotic, so that vectors which eventually exhibit exponential growth may suffer unbounded contraction in 
finite time. For this  reason  it is often referred to as \emph{nonuniform hyperbolicity}. 
It is significantly more general than uniform hyperbolicity and there are no topological obstructions to its occurrence:  any compact 
smooth manifold of dimension $\geq 2$ admits, for example, a volume-preserving $C^{\infty}$ diffeomorphism for which every invariant measure is hyperbolic \cite{DolPes02}. 
We refer to Section \ref{sec:pesinth} for additional background on nonuniform hyperbolicity, 
and to \cite{HasKat95, Has02, pesin.barreira} for a thorough exposition of the theory.

In this paper we address the question of whether nonuniformly hyperbolic dynamics can be \emph{approximated} by uniformly hyperbolic dynamics. 

\begin{theorem*}\label{main}
Let \( M \) be a  compact Riemannian manifold,  $f : M \rightarrow M$  a $C^{1+\alpha}$ diffeomorphism, \( \mu \) a non-atomic, ergodic, \( f \)-invariant, hyperbolic, Borel probability measure $\mu$.  Then there exists a sequence \( \{\Omega_{n}\} \) of topologically transitive, locally maximal, uniformly hyperbolic compact \( f \)-invariant sets such that \( \mu_{n}\to \mu \) in the weak-\( * \) topology for {\em any} sequence \( \{\mu_{n}\} \) 
of \( f \)-invariant ergodic probability measures with \( supp (\mu_{n}) \subseteq \Omega_{n} \). 
\end{theorem*}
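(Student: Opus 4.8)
The plan is to combine the structure of Pesin regular sets with a Katok-type horseshoe construction, but to be careful to assemble the horseshoe almost entirely out of copies of a \emph{single} long orbit segment that is generic for $\mu$ with respect to the Birkhoff ergodic theorem, so that every orbit inside it — hence every invariant measure it carries — is forced to imitate $\mu$. First I would set up the Pesin side, using the material of Section~\ref{sec:pesinth}: fix a large regularity level $\ell$ and work on the associated Pesin regular (Pesin block) set $\Lambda_{\ell}$, which is compact, satisfies $\mu(\Lambda_{\ell})\to 1$ as $\ell\to\infty$, carries a continuously varying Oseledec splitting, and admits Lyapunov (Pesin) charts along it in which $f$ acts as a uniformly hyperbolic map, with local stable/unstable manifolds of uniformly bounded size and uniform contraction/expansion rates depending only on $\ell$. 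The essential point, due to Katok, is that an orbit segment which \emph{begins and ends} in $\Lambda_{\ell}$ — even though it may leave $\Lambda_{\ell}$ in between — still carries a genuine uniformly hyperbolic structure: invariant stable/unstable cones at its endpoints which the derivative cocycle contracts/expands by a definite amount over the segment, up to a bounded distortion factor coming only from the two chart changes at the endpoints (which are in $\Lambda_{\ell}$).

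Next I would extract a good long segment. Since $\mu$ is ergodic, fix a point $x_{0}\in\Lambda_{\ell}$ that is Birkhoff-generic for $\mu$ with respect to a fixed countable dense family of continuous test functions; since $\mu$ is non-atomic I can also take $x_{0}$ to be a density point of $\mu|_{\Lambda_{\ell}}$, so that for every small $\rho$ the orbit of $x_{0}$ returns to $B(x_{0},\rho)\cap\Lambda_{\ell}$ with positive frequency, by Birkhoff applied to the indicator of that set. Let $u_{0}$ be the first such return time — a fixed number independent of everything below — and, given a large $N$, list the return times $u_{0}<u_{1}<\cdots$ lying in $[u_{0},N]$; positive frequency yields at least two of them, $u_{a}<u_{b}$, with $u_{a}-u_{0}$ and $u_{b}-u_{0}$ both of order $N$. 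Put $\sigma_{a}=(f^{u_{0}}x_{0},\dots,f^{u_{a}}x_{0})$ and $\sigma_{b}=(f^{u_{0}}x_{0},\dots,f^{u_{b}}x_{0})$, each of which starts and ends in $B(x_{0},\rho)\cap\Lambda_{\ell}$. Because $u_{0}$ is fixed while $N\to\infty$, the Birkhoff theorem guarantees that the empirical measures of $\sigma_{a}$, of $\sigma_{b}$, and in fact of \emph{every} sub-segment of the orbit of $x_{0}$ beginning at a sublinear time and of length comparable to $N$, converge weak-$*$ to $\mu$ as $N\to\infty$.

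Now I would build the hyperbolic set. Since $\sigma_{a}$ and $\sigma_{b}$ both begin and end $\rho$-close to the \emph{same} point $x_{0}\in\Lambda_{\ell}$, any bi-infinite concatenation of copies of $\sigma_{a}$ and $\sigma_{b}$ is a $\rho$-pseudo-orbit made of uniformly hyperbolic pieces in the sense above. Applying the shadowing lemma for such pseudo-orbits in Lyapunov charts (Katok's version of the Anosov closing/shadowing lemma), with $\rho$ small relative to the already-fixed level $\ell$, each concatenation is shadowed by a unique true orbit which inherits the uniform hyperbolicity constants of the pieces. Let $\Omega=\Omega_{\ell,N,\rho}$ be the closure of the union of these shadowing orbits: it is a compact, $f$-invariant, uniformly hyperbolic set; shadowing the concatenation that enumerates all finite $\{a,b\}$-words gives a dense orbit, so $\Omega$ is topologically transitive; and taking the defining neighbourhood $U$ to be a union of small balls around the pseudo-orbit points and using expansiveness in the charts, one verifies in the standard way that $\Omega$ is exactly the maximal invariant set in $U$, hence locally maximal. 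Since the two blocks have different lengths and are freely concatenable, $\Omega$ carries positive entropy and thus many invariant measures — which is precisely why the statement must quantify over all of them.

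Finally I would control every invariant measure on $\Omega$. Given any $f$-invariant ergodic $\mu'$ with $supp(\mu')\subseteq\Omega$, pick a $\mu'$-generic $z\in\Omega$; its orbit $\rho$-shadows some concatenation of the blocks $\sigma_{a},\sigma_{b}$. Partitioning a long segment $(z,\dots,f^{T}z)$ according to this block structure, using that within each block the empirical measure is $\eta$-close to $\mu$ (Step~2) and that shadowing moves a fixed test function by at most its modulus of continuity $\omega(\rho)$, one gets that the empirical measure of $z$ over $[0,T)$ is within $\eta+\omega(\rho)+O(N/T)$ of $\mu$ on each test function; letting $T\to\infty$ and passing to the limit defining $\mu'$ gives $\dist(\mu',\mu)\le C(\eta+\omega(\rho))$. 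Running the construction with $\ell=\ell_{n}\to\infty$, $\eta=\eta_{n}\to0$, $\rho=\rho_{n}\to0$ and $N=N_{n}\to\infty$ sufficiently fast then produces the sets $\Omega_{n}$ claimed. The hard part will be exactly the technical core underlying Steps~1 and~3: proving that return segments to a Pesin block behave uniformly hyperbolically and that pseudo-orbits built from them can be shadowed with the block's hyperbolicity constants, keeping the chart-change distortions uniformly bounded, and making sure that whatever "glue" is needed to turn the concatenations into genuine pseudo-orbits and into a locally maximal transitive set contributes nothing to the empirical measures — which is what forces $N_{n}$ to be chosen enormous compared with all the other parameters.
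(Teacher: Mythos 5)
Your proposal is correct in outline and rests on the same basic strategy as the paper: manufacture a uniformly hyperbolic, transitive, locally maximal set out of long orbit segments that are Birkhoff quasi-generic for \( \mu \) and that begin and end on a fixed Pesin block, and then control \emph{every} invariant measure on that set by decomposing the Birkhoff sums of a generic orbit into those blocks plus a boundary term of order \( N/T \); this last step is essentially identical to the technical Lemma of Section \ref{qg.horseshoes} and Proposition \ref{prop:qghorse}. Where you differ is in how the hyperbolic set is built. You concatenate two return segments of a single generic orbit into pseudo-orbits and appeal to Katok-type shadowing in Lyapunov charts, then argue transitivity and local maximality via expansiveness in a tubular neighbourhood; the paper instead quotes from \cite{katok.mendoza} the ``hyperbolic branch'' statement (Proposition \ref{pseudo.markov.property}, resting on Proposition \ref{hyperbolic.return.lemma}): each return of \( z\in\Lambda \) to \( \Lambda \) yields a hyperbolic diffeomorphism \( f^m:S_z\to U_z \) between admissible cylinders with \( \diam(f^j(S_z))\le\delta \) for all \( j\le m \), and two disjoint crossing branches of this kind (obtained from two quasi-generic returns, separated using non-atomicity of \( \mu \)) define a ``variable-time horseshoe'' whose saturate is transitive, locally maximal and uniformly hyperbolic by the standard horseshoe argument (Lemma \ref{geometric.model}). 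What each route buys: in the paper the small-diameter condition \( \diam(f^j(S))\le\delta \) plays exactly the role of your shadowing estimate \( \omega(\rho) \) in transferring quasi-genericity from the reference point to all orbits of a branch, and transitivity plus local maximality come for free from the product structure, so no shadowing lemma is invoked; your route is closer to Katok's original horseshoe construction and extracts both blocks from a single orbit, but the ``hard part'' you flag --- uniform hyperbolicity of return segments to a Pesin block, shadowing of their concatenations with uniform constants, and identifying the maximal invariant set of the neighbourhood with the shadowed orbits --- is precisely the input the paper imports wholesale from \cite{katok.mendoza}, so it would likewise have to be quoted rather than reproved, and nothing is gained in self-containedness. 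Two minor points to make explicit: since your \( \Omega \) is defined as a closure, you should note that limit points still shadow some bi-infinite block sequence (compactness of the sequence space), which your final estimate tacitly uses when decomposing a \( \mu' \)-generic orbit; and the fact that one block is a prefix of the other is harmless, since the theorem requires no conjugacy with a full shift, only transitivity, local maximality, uniform hyperbolicity and the block decomposition of Birkhoff sums.
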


This  generalizes various well known results. In particular we mention the pioneering result of 
\cite{katok} where it is proved that hyperbolic measures can be  approximated by  measures supported on (uniformly) hyperbolic periodic orbits. This has spurred a number of results, many of them quite recent,  showing that various dynamical quantities can be approximated by the corresponding quantities on hyperbolic periodic orbits, 
see for example \cite{Dai, Hirayama, LiangLiuSun, Ugarcovici, WangSun, WolfGelfert}. In the special case where \( \mu \) is a  Sinai-Ruelle-Bowen measure  (i.e. a measure with a particular absolute continuity property on unstable manifolds, see \cite{pesin.barreira} for precise definitions), it was proved in \cite{sanchez.salas}, generalizing earlier work \cite{mendoza} for two dimensional systems, that \( \mu \) can be approximated by certain particular ergodic 
measures $\mu_n$ supported on horseshoes $\Omega_n$ with arbitrarily large unstable Hausdorff dimension (that is,  the Hausdorff dimension of unstable Cantor sets $\Omega_n \cap W^u(x)$).
It was also proved in \cite{katok.mendoza} that a non-atomic hyperbolic measure $\mu$ can be approximated by ergodic measures $\mu_n$ supported 
on horseshoes $\Omega_n$ such that $h({\mu_n}) \to h({\mu})$, where $h(\mu)$ is the metric entropy.  In both of these cases, the approximating measures are ``maximizing measures'' of the Hausdorff dimension and of the entropy respectively, and the methods used to obtain the results involve some variational principles. More recently, \cite{gelfert} has applied analogous methods in the setting of 
non uniformly expanding (non invertible)  endomorphisms and considered 
the approximation of the topological pressure. 

In this note we address the approximation problem from a more topological point of view, proving that uniformly hyperbolic subsets $\Omega_{n}$ can be  chosen such that \emph{all its ergodic measures} are uniformly close to $\mu$. We emphasize that this result is not contained in any of the existing papers in the literature. Moreover, although we do use some basic facts from the theory of nonuniformly hyperbolic systems, the rest of the arguments and methods are very simple and natural. 
We divide the proof into a few simple steps. In Section \ref{sec:pesinth} we give some very basic background on Pesin Theory and state a standard Proposition which we will use in the construction.  In Section \ref{sec:geo} we introduce a geometric model of what we call \emph{variable-time horseshoes}. This generalizes the standard horseshoe construction in the case where the various branches have variable return time and constitutes the basis of our construction of the approximating uniformly hyperbolic sets \( \Omega_{n} \).  In Section  
\ref{exists.qg.branches} we show that variable time horseshoes actually exist and that we can choose them satisfying some ``quasi-genericity'' properties to be defined below. Finally, in Section 
\ref{qg.horseshoes} we show that these quasi-generic horseshoes actually approximate the measure \( \mu \) in the desired manner. 

\section{Background on Pesin Theory}
\label{sec:pesinth}

We recall in this section some well known results from the theory of systems with non-zero Lyapunov exponents and refer the to reader to 
\cite{katok.mendoza, pesin.barreira} for the details.
In particular we state a simple and 
relatively standard result which we will use in the subsequent sections. We emphasize that, apart from this result, the rest of the construction and arguments in the paper are completely self contained. For the convenience of readers who are already familiar with this background material we state the Proposition precisely here and postpone the definitions to the subsequent subsections. 

\begin{proposition}\label{pseudo.markov.property}
Let $\mu$ be a non-atomic, hyperbolic $f$-invariant Borel probability. Then, for every small \( \delta > 0 \) 
there exists a ``rectangle'' $\R$ and a subset of positive measure $\Lambda \subset \R$ such that for every \(  z \in \Lambda \) and \( m\geq 1 \) with 
\( f^{m}(z) \in \Lambda \) there exist a ``quasi-horizontal full width cylinder'' $S_z \subset \R$ containing $z$, a ``quasi-vertical full height 
cylinder'' $U_z \subset \R$ containing $f^m(z)$ and a hyperbolic diffeomorphism $f^m : S_z \to U_z$ such that
$$
\diam(f^i(S)) \leq \delta \quad\text{for all}\quad i = 0 , \cdots , m.
$$
\end{proposition}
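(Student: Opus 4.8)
The plan is to assemble $\R$ and $\Lambda$ out of the standard Pesin‑theoretic package and then obtain $S_z$ and $U_z$ as dynamically defined sub‑rectangles of $\R$. First I would fix $\varepsilon>0$ much smaller than the smallest modulus of a Lyapunov exponent of $\mu$ and pass to an adapted (Lyapunov) metric $\|\cdot\|'$, defined along every Oseledec‑regular orbit, in which $Df$ expands $E^u$ by at least $e^{\lambda}$ and contracts $E^s$ by at most $e^{-\lambda}$, with $\|\cdot\|\le\|\cdot\|'_x\le C(x)\|\cdot\|$ for an $\varepsilon$‑tempered function $C(\cdot)$; I would also fix narrow stable and unstable cone fields $C^s_x,C^u_x$ around $E^s_x,E^u_x$, respectively $Df^{-1}$‑ and $Df$‑invariant, keeping the same rates after shrinking $\lambda$ slightly. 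Then I would choose a compact Pesin set $\mathcal P=\mathcal P_\ell$ with $\mu(\mathcal P)>0$ on which $C(\cdot)\le C_0$, the splitting and the adapted metric vary continuously, and through every $x$ there pass local stable and unstable manifolds $W^s_{r_0}(x),W^u_{r_0}(x)$ of a uniform size $r_0>0$ with uniformly bounded geometry.

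Next I would build $\R$. Pick $p\in\mathcal P$ which is a density point of $\mu|_{\mathcal P}$ and lies in the support of $\mu$ ($\mu$‑a.e.\ point works). Inside one Lyapunov chart at $p$, let $\R$ be a curvilinear rectangle with $\diam(\R)$ smaller than both $\delta/(10(1+C_0))$ and $r_0/(4C_0)$, whose two ``horizontal'' sides are sub‑arcs of local stable manifolds and whose two ``vertical'' sides are sub‑arcs of local unstable manifolds of points of $\mathcal P$ near $p$, thin enough that $W^s_{r_0}(x)$ and $W^u_{r_0}(x)$ cross $\R$ from side to side for all $x\in\mathcal P$ near $p$, and large enough to contain a chart‑ball around $p$. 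Put $\Lambda:=\mathcal P\cap\R$, discarding the $\mu$‑null set of points on $\partial\R$ (and, if necessary, replacing $\R$ by a slightly smaller rectangle so that returns land well inside it); since $p$ is a density point and $\mu$ charges every neighbourhood of $p$, $\mu(\Lambda)>0$.

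Now fix $z\in\Lambda$ and $m\ge1$ with $w:=f^m(z)\in\Lambda$. I would simply \emph{define} $S_z$ to be the connected component containing $z$ of $\R\cap f^{-m}(\R)$ and $U_z:=f^m(S_z)\subset\R$. To see that $S_z$ is a quasi‑horizontal full‑width cylinder: the vertical sides of $\R$ are unstable‑admissible, so their $f^{-m}$‑preimages are unstable‑admissible arcs of length at most $C_0e^{-\lambda m}\diam(\R)$, whence $f^{-m}(\R)$ is a disjoint union of thin quasi‑horizontal strips and $S_z$ is the one through $z$; and $f^{-m}(W^s_{r_0}(w))$ is a stable‑admissible curve through $z$ of length $\gg\diam(\R)$, so it meets $\R$ in a full‑width stable slice through $z$ whose $f^m$‑image is a tiny sub‑arc of $W^s_{r_0}(w)$ near $w$, hence inside $\R$ — so this slice lies in $\R\cap f^{-m}(\R)$, hence in $S_z$. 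Symmetrically $f^m\bigl(W^u_{r_0}(z)\cap\R\cap f^{-m}(\R)\bigr)=f^m(W^u_{r_0}(z))\cap\R$ is a full‑height unstable slice through $w$ inside $U_z$, so $U_z$ is a quasi‑vertical full‑height cylinder. That $f^m:S_z\to U_z$ is a hyperbolic diffeomorphism is then routine: it is injective (as $f$ is), the cone fields and adapted rates hold along the whole (regular) orbit of $z$, so $Df^m$ expands vectors in $C^u$ by $\ge e^{\lambda m}$ and $Df^{-m}$ expands vectors in $C^s$ by $\ge e^{\lambda m}$ in the adapted metric, and the cones are carried correctly; the standard cone/graph‑transform argument finishes it.

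Finally, the diameter bound, which I expect to be the only genuinely delicate point. For $0\le i\le m$ I would estimate the stable and unstable extents of $f^i(S_z)$ separately. Since $S_z\subset\R$ has stable extent at most $\diam(\R)$, at $z$ this is at most $C_0\diam(\R)$ in the adapted metric, and contraction of the stable cone by $e^{-\lambda}$ at each step of the orbit gives adapted stable extent of $f^i(S_z)$ at $f^i(z)$ at most $e^{-\lambda i}C_0\diam(\R)$, hence Riemannian stable extent at most $C_0\diam(\R)$. Writing $f^i(S_z)=f^{-(m-i)}(U_z)$ and using that the tangents to the unstable slices of $U_z=f^m(S_z)$ lie in $Df(C^u)$ — so $Df^{-1}$ contracts them by $e^{-\lambda}$ at each step — the same conversion bounds the Riemannian unstable extent by $C_0\diam(\R)$. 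Hence $\diam(f^i(S_z))\le(1+2C_0)\diam(\R)\le\delta$. The point to watch is that the orbit segment $z,fz,\dots,f^mz$ need not stay in $\mathcal P$; this is harmless because the adapted metric and cone structure live along every regular orbit, and the only tempered loss — the factor $C_0$ — enters when comparing metrics at the endpoints $z,w\in\mathcal P$, where $C(\cdot)\le C_0$, and not at the intermediate points. Assembling these facts proves the Proposition.
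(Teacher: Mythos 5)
Your proposal and the paper's proof diverge at the key step: the paper does not reprove the existence of hyperbolic branches at all — its proof of this Proposition is a two-line reduction to Proposition \ref{hyperbolic.return.lemma}, i.e.\ to \cite[Theorem S.4.16]{katok.mendoza}, choosing nested reduced rectangles $\R^0(x,h_\beta)\subset\R^0(x,h_\delta)$ with $C(\Lambda)h_\delta<\delta$ so that every return of $\Lambda_0=\R^0(x,h_\beta)\cap\Lambda$ produces a branch with the stated diameter bound. You instead try to construct the branch from scratch, which is legitimate in principle, but your construction has a genuine gap exactly at the point you flag as ``the only genuinely delicate point''. You define $S_z$ as the connected component of $\R\cap f^{-m}(\R)$ containing $z$, and then apply the cone invariance, the adapted-metric contraction rates, and the resulting diameter estimates to \emph{all} of $S_z$. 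Those are nonlinear estimates that hold only inside the regular (Lyapunov chart) neighbourhoods along the orbit of $z$ (this is the content of \eqref{perturbative.argument}); the points of $S_z$ other than $z$ need not be Oseledec-regular, so for them ``the adapted metric and cone structure live along every regular orbit'' gives nothing unless you first show that their intermediate iterates $f^i$, $0<i<m$, stay inside the charts of $f^i(z)$. Connectivity to $z$ inside $\R\cap f^{-m}(\R)$ only constrains the orbit at times $0$ and $m$; it does not exclude points of the component whose intermediate orbit leaves the charts (and for which no contraction or cone statement is available). The standard proof — precisely the cited Theorem S.4.16 — avoids this by constructing the cylinder inductively via a graph transform in the charts, so that chart-confinement (using that $r(f^i(z))$ decays at most at the tempered rate $e^{-\epsilon i}$ while the stable width decays like $e^{-(\chi-\epsilon)i}$) is built in and the bound $\diam f^i(S_z)\leq Ch$ falls out of the induction. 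As written, your diameter and hyperbolicity claims presuppose this confinement rather than prove it.

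A second, related problem is dimensional: your rectangle $\R$ is described as a curvilinear quadrilateral whose ``horizontal'' and ``vertical'' sides are sub-arcs of local stable and unstable manifolds of Pesin points, and the thinness of the components of $f^{-m}(\R)\cap\R$ is deduced from backward contraction of those unstable boundary arcs. This is a surface picture. In general dimension the faces of a rectangle have dimension $n-1$ while $W^{s/u}_{loc}$ have dimensions $s$ and $u$, and the Pesin set through whose points these manifolds exist is not a neighbourhood, so $\partial\R$ cannot be assembled from invariant manifolds; the paper's rectangles are images of cubes under Lyapunov charts, and the admissibility of the cylinder boundaries is again something that comes out of the chart construction, not from invariance of $\partial\R$. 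The parts of your argument that do work (choosing a density point of a Pesin set with continuous charts, converting adapted to Riemannian lengths using the one-sided bound valid everywhere and the bounded distortion $D(x)\leq C_0$ only at the endpoints $z,f^m(z)\in\Lambda$, and shrinking $\R$ so returns land well inside) are consistent with the paper's setup; the missing ingredient is precisely the chart-based inductive construction of $S_z$ that the paper imports wholesale from Katok--Mendoza.
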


In the rest of this Section, we give the precise definitions and constructions required to understand the statement of Proposition \ref{pseudo.markov.property} and also a sketch of the proof.

\subsection{Nonuniform hyperbolicity}
As mentioned above, for an \( f \)-invariant Borel probability measure \( \mu \), there exists a set \( \Sigma \) with \( \mu(\Sigma) =1  \) for which the Lyapunov exponents 
\( \chi(x,v) \) are well defined for every \( x\in\Sigma \) and every \( v\in T_{x}M\setminus \{0\} \) and the measure \( \mu \) is hyperbolic if all the Lyapunov exponents are non-zero. Moreover, if \( \mu \) is ergodic, as in our setting, then \( \chi(x,v) \) can only take on a finite number of values on \( \Sigma \). 
 In that case, there exists  a constant $\chi $ satisfying
 \begin{equation}\label{eq:chi}
  \min\{|\chi(x,v)|: x\in \Sigma, v\in T_{x}M\setminus \{0\}\}> \chi > 0. 
  \end{equation}
 Then, for all  sufficiently 
small $\epsilon > 0$ such that \( \chi-\epsilon >0 \) (or equivalently $-\chi + \epsilon < 0$),  by Oseledec's theorem there exist measurable $Df$-invariant decompositions
$$
T_xM = E^s(x) \oplus E^u(x),
$$
and tempered Borel measurable functions $C_{\epsilon}, K_{\epsilon} : \Sigma \to (0,+\infty)$ such that
$$
\begin{cases}
\|Df^n(x)v\|    \leq C_{\epsilon}(x)e^{n(-\chi+\epsilon)}\|v\|  & \ \forall \ v \in E^s(x) \ \forall \ n \geq 0 \\
\|Df^{-n}(x)v\| \leq C_{\epsilon}(x)e^{n(-\chi+\epsilon)}\|v\|  & \ \forall \ v \in E^u(x) \ \forall \ n \geq 0 \\
\end{cases}
$$
and $\angle(E^s(x),E^u(x)) \geq K_{\epsilon}(x)$, where
$$
E^s(x) := \bigoplus\limits_{\chi_i(x) < 0}E_i(x) \quad\text{and}\quad E^u(x) := \bigoplus\limits_{\chi_i(x) > 0}E_i(x).
$$
Moreover, by Tempering-Kernel Lemma \cite[Lemma S.2.12]{katok.mendoza}, we may suppose that
$$
(1+\epsilon)^{-1} \leq \dfrac{C_{\epsilon}(f(x))}{C_{\epsilon}(x)}, \dfrac{K_{\epsilon}(f(x))}{K_{\epsilon}(x)} \leq 1+\epsilon, \quad\mu-a.e.
$$
We remark that the properties given above as a consequence of the hyperbolicity of \( \mu \) can also be formulated without any reference to the measure \( \mu \) and are essentially nonuniform versions of standard uniformly hyperbolic conditions, see \cite[Theorem 6.6]{pesin.barreira}. 

\subsection{Regular neighbourhoods and  rectangles}

The fundamental starting point for understanding and working with the geometric structure of systems with non-zero Lyapunov exponents is the notion of 
{\em regular neighbourhood}.
Our first step is to introduce a linear coordinate system $L(x): \real^n \to T_xM$, for every $x \in \Sigma$, such that $A(x)$, the representative of the derivative $Df(x)$ in the new coordinates, is a diagonal block matrix adapted to Oseledec's 
decomposition. The map $x \in \Sigma \mapsto L(x) \in GL_n(\real)$ is Borel measurable
and the coordinate changes are {\em tempered}:
$$
\limsup_{n \to \pm\infty}\dfrac{1}{n}\max\{\log\|L(f^n(x))\|, \log\|L^{-1}(f^n(x))\|\} = 0 \quad\mu\text{-a.e.}
$$
Let $<\cdot\,,\,\cdot>$ denote the standard inner product in $\real^n$, then a new measurable {\em Lyapunov metric} $<\cdot\,,\,\cdot>'_x$ is defined 
so that $L(x) : (\real^n, <\cdot\,,\,\cdot>) \to (T_xM,<\cdot\,,\,\cdot>'_x)$ is a linear isometry and such that
$$
e^{-\chi - \epsilon} \leq \|A^u(x)^{-1}\|, \|A^s(x)\| \leq e^{-\chi + \epsilon} \quad\mu\text{-a.e.},
$$
where $A^s$ and $A^u(x)$ are restrictions of $A(x)$ to the stable and unstable subspaces, respectively.
The new norm is equivalent to the Riemannian metric $g_x$, bounded by a measurable tempered correction $D(x)$ depending only on the Riemannian structure of $M$ and $\angle(E^s(x),E^u(x)>$: 
$$
\dfrac{\|v\|_x}{\sqrt{2}} \leq \|v\|'_x \leq D(x)\|v\|_x \quad\text{for every}\quad v \in T_xM-\{0\}.
$$
This process is known as {\em $\epsilon$-reduction}, see \cite[Theorem S.2.10]{katok.mendoza} and \cite[\S 5.5]{pesin.barreira}.

Our second step is to introduce coordinate systems in which the dynamics is essentially uniformly hyperbolic. The domain in which these local coordinate systems apply are called regular neighbourhoods. 
For this we define {\em Lyapunov charts}
$$
\psi_x := \exp_x \circ L(x)
$$
where $\exp_x : B(0,r_M) \subset T_xM \to M$ are local geodesic coordinates and $r_M > 0$ the injectivity radius of $(M,g)$. It is proved in \cite[Theorem S.3.1]{katok.mendoza} 
and \cite[\S 8.7]{pesin.barreira} that there exists a tempered Borel measurable function $r: \Sigma_0 \to (0,+\infty)$ such that
$$
\psi_x : B(0,r(x)) \subset \real^n \to M \quad\text{with}\quad \psi_x(0) = x,
$$
is an embedding; moreover
\begin{equation}\label{perturbative.argument}
\dist_{C^1}(f_x \mid_{B(0,r(x))},A(x)) < \epsilon, 
\end{equation}
where $f_x := \psi_{f(x)}^{-1}\circ{f}\circ\psi_x$ is the representative of $f$ in the given coordinates. In particular, 
$$
f_x(v,w) = (A^s(x)v + \phi^s(v,w),A^uw + \phi^s(v,w)) \quad\text{with}\quad \|(\phi^s,\phi^u)\|_{C^1(B(0,r(x))} < \epsilon.
$$
Let $\sigma_{x} : [-1,1]^n \to [-t(x),t(x)]^n$ be the linear rescaling onto the maximal cube contained in $B(0,r(x))$.
\begin{definition}
The rectangle $\R(x)$ is the image of the cube $[-1,1]^n \subset \real^n$ under
$$
e_x := \psi_x \circ \sigma_{x} : [-1,1]^n \to M.
$$
\end{definition}
\subsection{Cylinders and hyperbolic branches}

The crucial feature of a regular neighbourhood is that it admits a coordinate system in which the dynamics is essentially uniformly hyperbolic and in 
particular it defines locally certain approximate \emph{stable} and \emph{unstable} directions which are transversal to each other. 
Let us fix some $0 < \gamma < 1/2$ and decompose the unit radius cube as a product $I^n = I^s \times I^u$.

\begin{definition}
A {\em stable admissible manifold} is a graph $\gamma^s = \{e_x(z,\hat{s}(z)): z \in I^s\}$, 
where $\hat{s} \in C^1(I^s,I^u)$ is a smooth map with $Lip(\hat{s}) := \sup_{z \in I^s}\|D\hat{s}(z)\| \leq \gamma$.
\end{definition}

Admissible manifolds endow $\R(x)$ with a product structure: any given pair of admissible manifolds $\gamma^s$ and $\gamma^u$ intersectes transversally at 
a unique point with an angle bounded from below. Moreover, the map $(\gamma^s, \gamma^u) \to \gamma^s\cap\gamma^u$ so defined satisfies a Lipschitz 
condition  \cite[\S 3.b]{katok.mendoza} and \cite [\S 8]{pesin.barreira}.
The transversal structure of the admissible stable and unstable manifolds inside a rectangle \( \R \) allows us to define the notion of 
{\em admissible stable and unstable cylinders}. An {\em admissible stable cylinder} \( S \subseteq \R\) is a subrectangle of $\R$ whose boundaries are 
piecewise smooth sets foliated by segments of admissible stable and unstable manifolds such that the stable manifolds stretch fully across the rectangle \( \R \), and similarly, 
an {\em admissible unstable cylinder} \( U \subseteq \R\) is a subrectangle of $\R$ whose boundaries are segments of admissible stable and unstable 
manifolds such that the unstable manifolds stretch fully across the rectangle \( \R \).

The notion of admissible manifold is related to certain  cone fields $\K^s$, $\K^u$.
For every $z \in \R$ we define $\K^s_z \subset T_zM$ as the image under $De_x(p)$ evaluated at $p(z) = e^{-1}_x(z) \in I^n$, of the 
cone of width $\gamma$ 'centered' at $\real^s\oplus\{0\}$, that is, the set of vectors in $\real^n$ making an angle bounded by $\gamma$ with $\real^s\oplus\{0\}$. 
We define $\K^u_z \subset T_zM$ likewise considering a cone of width $\gamma$ 'centered' at $\{0\}\oplus\real^u$. 
Notice that admissible manifolds are exactly those smooth graph-like submanifolds whose tangent spaces rest inside stable and unstable cones.

We say that a $C^1$ diffeomorphism $g : S \to U$ between admissible cylinders is {\em hyperbolic} if it preserves the cone fields $\K^s$ and $\K^u$, that is,
$$
Dg(z)\K^u_z \subset int\,\K^u_{g(z)} \ \forall \ z \in S\quad\text{and}\quad Dg^{-1}(z)\K^s_z \subset int\,\K^s_{g^{-1}(z)}  \ \forall \ z \in U,
$$

\begin{definition}
Let $\R$ and $\subR$ be regular rectangles. If some iterate \( f^{m} \) maps an admissible stable cylinder  $S \subset \R$ diffeomorphically and hyperbolically 
to an admissible unstable cylinder $U \subset \subR$, we shall say that
$$
f^m : S \to U
$$
is a {\em hyperbolic branch}.
\end{definition}

Proposition \ref{pseudo.markov.property}, which we stated above and will use in an essential way below,  is a result about the existence of hyperbolic branches. 

\subsection{Uniformly hyperbolic Pesin sets}
We now introduce a standard ``filtration'' of \( \mu \) almost every point which gives a countable number of nested, 
uniformly hyperbolic (but not \( f \)-invariant) sets, often referred to as ``Pesin sets'', whose points admit uniform hyperbolic bounds and uniform 
lower bounds on the sizes of the local stable and unstable manifolds.

For  $\chi > 0$ as in \eqref{eq:chi} above, and every positive integer $\ell > 0$, we define a (possibly empty) 
compact (not necessarily invariant) set $\Lambda_{\chi,\ell} \subset M$ such that $E^s|_{\Lambda_{\chi,\ell}}$ and  $E^u |_{\Lambda_{\chi,\ell}}$ vary continuously with the point $x \in \Lambda_{\chi,\ell}$ and  such that 
$$
\begin{cases}
\|Df^n(x)v\| \leq {\ell}e^{-n\chi}\|v\| & \|Df^{-n}(x)v\| \geq \ell^{-1}e^{n\chi}\|v\| \ \forall \ v \in E^s(x) \ \forall \ n \geq 0 \\
\|Df^{-n}(x)v\| \leq {\ell}e^{-n\chi}\|v\| & \|Df^n(x)v\| \geq \ell^{-1}e^{n\chi}\|v\| \ \forall \ v \in E^s(x) \ \forall \ n \geq 0.
\end{cases}
$$
Moreover,  the angles between the stable and unstable subspaces satisfy
$$
\angle(E^s(x),E^u(x)) \geq \ell^{-1}
$$
for every  $x \in \Lambda_{\chi,\ell}$. As the rate of hyperbolicity of $\mu$ is bounded from below by $\chi > 0$ 
we have 
$$
\mu(\Lambda_{\chi,\ell}) \to 1 \quad\text{as}\quad \ell \to +\infty.
$$
The following result is proved in  \cite[Theorem S.4.3]{katok.mendoza}.
\begin{lemma}\label{alpha}
For every $0 < \alpha < 1$ there exists a $\Lambda = \Lambda_{\chi,\ell}$ with $\mu(\Lambda) \geq 1-\alpha$ 
such that $\R(x)$ vary continuously with $x \in \Lambda$, meaning that linear distortion $D(x)$, size of Lyapunov charts $r(x)$ 
and $x \mapsto e_x \in Embedd(I^s\times{I^u}, M)$ are continuous functions on $\Lambda$.
\end{lemma}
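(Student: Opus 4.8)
The plan is to obtain $\Lambda$ by intersecting a Pesin set $\Lambda_{\chi,\ell}$, for a suitably large $\ell$, with a compact set of almost full measure furnished by Luzin's theorem. The point to record first is that every ingredient entering the definition of the rectangle $\R(x) = e_x([-1,1]^n)$ is a Borel measurable function of $x$ on the full-measure set $\Sigma$ on which the $\epsilon$-reduction is defined: the Oseledec splitting $T_xM = E^s(x)\oplus E^u(x)$ and the reducing linear coordinates $L(x)\in GL_n(\real)$ are Borel (\cite[Theorem S.2.10]{katok.mendoza}); the chart radius $r(x)$ is a Borel, tempered function (\cite[Theorem S.3.1]{katok.mendoza}), hence so is the linear rescaling $\sigma_x$, which is completely determined by $r(x)$; and the distortion $D(x)$ is Borel as well. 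Since the Riemannian exponential map $(x,v)\mapsto\exp_x(v)$ is smooth, the composition $e_x = \exp_x\circ L(x)\circ\sigma_x$ depends in a Borel measurable way on $x$ when the target is the separable metrizable space $\mathrm{Embedd}(I^s\times I^u, M)$ of $C^1$ embeddings with the $C^1$ topology.

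Now fix $0<\alpha<1$. Because $M$ is a compact metric space, $\mu$ is inner regular with respect to compact subsets, so Luzin's theorem applies to the finitely many Borel maps $x\mapsto D(x)$, $x\mapsto r(x)$, $x\mapsto L(x)$ and yields a compact set $K\subset\Sigma$ with $\mu(K)>1-\alpha/2$ on which all three are continuous. By smoothness of $\exp$, continuity of $x\mapsto(L(x),r(x))$ on $K$ propagates to continuity of $x\mapsto e_x$ on $K$ in the $C^1$-embedding topology, and a fortiori to continuity of the transversal product structure and of the cone fields $\K^s,\K^u$ attached to $\R(x)$. Finally, using $\mu(\Lambda_{\chi,\ell})\to 1$, choose $\ell$ so large that $\mu(\Lambda_{\chi,\ell})>1-\alpha/2$ and set $\Lambda := \Lambda_{\chi,\ell}\cap K$. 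Then $\Lambda$ is compact and $\mu(\Lambda)>1-\alpha$; it inherits the uniform hyperbolic bounds and the continuity of $E^s,E^u$ from $\Lambda_{\chi,\ell}$, and the continuity of $D(x)$, $r(x)$ and $x\mapsto e_x$ from $K$. Since $\Lambda$ satisfies all the defining inequalities of $\Lambda_{\chi,\ell}$ with the same $\ell$, we relabel it $\Lambda_{\chi,\ell}$; it has all the asserted properties.

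The only step that calls for genuine care is the middle one: one must check that $\mathrm{Embedd}(I^s\times I^u, M)$ equipped with the $C^1$ topology is a separable metrizable space, so that Luzin's theorem is applicable to $x\mapsto e_x$, and that continuity of the finite-dimensional data $(x,L(x),r(x))$ does pass through the exponential map so as to give continuity not merely of the maps $e_x$ but of every derived object --- the product structure, the admissibility cones $\K^s,\K^u$, the stable and unstable holonomies --- that the later sections invoke without comment. Everything else is soft: the required measurability is already part of the Pesin-theoretic construction recalled above, and the remainder is Luzin's theorem together with the trivial fact that the intersection of two sets of measure greater than $1-\alpha/2$ has measure greater than $1-\alpha$.
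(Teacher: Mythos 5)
Your proposal is correct, but note that the paper does not actually prove Lemma \ref{alpha}: it only quotes it, the statement being ``proved in \cite[Theorem S.4.3]{katok.mendoza}''. What you wrote is essentially the standard argument behind that cited result, so your proof is a legitimate self-contained substitute for the citation: all the data defining $\R(x)$ (the Oseledec splitting, $L(x)$, $r(x)$, $D(x)$, hence $e_x$) are Borel on the full-measure Oseledec set, Luzin's theorem (applicable because $\mu$ is a Borel probability on the compact metric space $M$, hence Radon) gives a compact set $K$ with $\mu(K)>1-\alpha/2$ on which they are continuous, and intersecting with a Pesin set $\Lambda_{\chi,\ell}$ of measure $>1-\alpha/2$ yields the desired $\Lambda$. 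Two minor remarks. First, your relabelling of $\Lambda_{\chi,\ell}\cap K$ as ``a $\Lambda_{\chi,\ell}$'' is harmless here because the paper defines $\Lambda_{\chi,\ell}$ by the hyperbolicity estimates, the continuity of $E^s,E^u$ and compactness rather than by any maximality, and in any case the lemma is only used later through the compactness of $\Lambda$, the positivity of $r_\Lambda=\min_{x\in\Lambda}r(x)$ and the uniform continuity of the charts, all of which your set has. Second, since you deduce continuity of $x\mapsto e_x$ on $K$ from continuity of $(L(x),r(x))$ together with smoothness of the exponential map, you never need to apply Luzin directly to the map into $Embedd(I^s\times I^u,M)$, so the separability discussion you flag as the delicate point is actually dispensable; the only measurability input genuinely needed is that $L(\cdot)$, $r(\cdot)$, $D(\cdot)$ are Borel, which is indeed part of the $\epsilon$-reduction recalled in Section \ref{sec:pesinth}.
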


We therefore fix some \( \alpha \) and let \( \Lambda \) be the set given by Lemma \ref{alpha}. 
Let $r_{\Lambda} := \min_{x \in \Lambda}r(x) > 0$ be the minimal radius for Lyapunov charts for $x \in \Lambda$ and denote 
$$
\sigma^0_x : [-1,1]^n \to [-t_0,t_0]^n
$$
the linear rescaling onto the maximal cube contained in $B(0,r_{\Lambda}/2)$.

\begin{definition}
Given $0 < h < 1$, we define the {\em $h$-reduced rectangle of center $x$} as the image of the cube $[-h,h]^n \subset [-1,1]^n$ under the 
parametrization $e^0_x := \psi_x \circ \sigma^0_{x}$, i.e.
$$
R^0(x,h) := e^0_x([-h,h]^n).
$$
\end{definition}

The following statement is contained in \cite[Theorem S.4.16]{katok.mendoza}.  

\begin{proposition}\label{hyperbolic.return.lemma}
There exists a constant $C=C(\Lambda) > 0$  depending only on the Pesin set $\Lambda$, such that for any $0 < h < 1$ there exists 
$\beta = \beta(h,\Lambda) > 0$ with the following property: for every $x,y,z \in \Lambda$ with 
$f^m(z) \in \Lambda$ and $\dist(x,z),\dist(y,f^m(z)) < \beta$, there exists an admissible stable cylinder $S_z \subset \R^0(x,h)$ containing $z$ 
and an admissible unstable cylinder $U_{z} \subset \R^0(y,h)$ contaning $f^m(z)$ such that $f^m : S_z \to U_z$
is a hyperbolic branch and $\diam(f^j(S_z)) \leq Ch$, for every $j = 0, \cdots , m-1$. 
\end{proposition}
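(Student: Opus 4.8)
The plan is to realise the branch $f^{m}\colon S_{z}\to U_{z}$ as the hyperbolic graph transform of a \emph{single} uniformly hyperbolic map, using Lyapunov charts only at the two endpoints $x$ and $y$ and pushing all the orbit--dependent information into pointwise estimates for the derivative cocycle along $z,f(z),\dots,f^{m}(z)$. The point is that these pointwise estimates are uniform: by construction of the Lyapunov metric one has $\|A^{s}(w)\|,\|A^{u}(w)^{-1}\|\le e^{-\chi+\epsilon}$ at \emph{every} point $w$, so after fixing the cone width $\gamma$ small, vectors in the stable cone are contracted by a uniform factor $\lambda<1$ per forward step and vectors in the unstable cone by a uniform factor $\lambda<1$ per backward step, whether or not $w$ lies in $\Lambda$; in particular $Df^{m}$ maps $\K^{u}$ strictly into itself expanding by at least $\sim\lambda^{-m}$, and $Df^{-m}$ maps $\K^{s}$ strictly into itself contracting by at most $\sim\lambda^{m}$, all the way along the orbit.

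First I would record the uniform data at the endpoints. Since $x,y,z,f^{m}(z)\in\Lambda$ and $\Lambda$ is compact, Lemma~\ref{alpha} bounds the embeddings $e^{0}_{x},e^{0}_{y}$, the distortions $D(z),D(f^{m}(z))$, the angle between the Oseledec subspaces and, through $r_{\Lambda}$, the chart sizes, all in terms of $\Lambda$ alone; in particular $\diam R^{0}(x,h)\le c_{\Lambda}\,h$ and likewise at $y$, and, for $\beta$ small, the cone fields on $R^{0}(x,h)$ and $R^{0}(y,h)$ approximate the Oseledec directions at $z$ and at $f^{m}(z)$. Then I would set $S_{z}$ to be the connected component containing $z$ of $R^{0}(x,h)\cap f^{-m}(R^{0}(y,h))$. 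By the uniform backward rates above, $f^{-m}(R^{0}(y,h))$ is a thin strip---unstable width $\lesssim\lambda^{m}h$, stable length $\gtrsim\lambda^{-m}h$---crossing $R^{0}(x,h)$, so the standard horseshoe / graph--transform argument for the composite $f^{m}$ shows that $S_{z}$ is a full--width admissible stable cylinder in $R^{0}(x,h)$, that $U_{z}:=f^{m}(S_{z})$ is a full--height admissible unstable cylinder in $R^{0}(y,h)$, and that $f^{m}\colon S_{z}\to U_{z}$ is a hyperbolic branch. Choosing $\beta=\beta(h,\Lambda)$ small enough that $z$ lies well inside $R^{0}(x,h)$ and $f^{m}(z)\in R^{0}(y,h)$ guarantees that these cylinders are nonempty and contain $z$ and $f^{m}(z)$ respectively.

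For the diameter bound, fix $j$ with $0\le j< m$. The ``along--stable'' extent of $f^{j}(S_{z})$ is the $f^{j}$--image of a stable admissible manifold inside $S_{z}\subset R^{0}(x,h)$, whose Lyapunov length at $z$ is at most $c_{\Lambda}h$; since its tangent vectors lie in the stable cone, this extent is at most $\lambda^{j}c_{\Lambda}h$. Dually, the ``across--unstable'' extent of $f^{j}(S_{z})$ is the $f^{-(m-j)}$--image of the corresponding piece of $U_{z}\subset R^{0}(y,h)$, hence at most $\lambda^{m-j}c_{\Lambda}h$. Both are $\le c_{\Lambda}h$, so $f^{j}(S_{z})$ has Lyapunov--metric diameter at most $2c_{\Lambda}h$, and the one--sided \emph{uniform} comparison $\|v\|_{w}\le\sqrt{2}\,\|v\|'_{w}$---it is only the reverse inequality that involves the unbounded factor $D(w)$, which we never use---turns this into $\diam f^{j}(S_{z})\le Ch$ with $C=C(\Lambda)$, independent of $j$, $m$ and of the orbit.

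The step I expect to be the real obstacle---and which is the technical core of \cite[Theorem S.4.16]{katok.mendoza}---is carrying out the graph--transform construction for $f^{m}$ purely from the pointwise, chart--free nonuniform hyperbolicity of the cocycle: the Lyapunov charts $\psi_{f^{j}(z)}$ at intermediate times may be far too small (their radii $r(f^{j}(z))$ decay with $j$) to contain the orbit segment $f^{j}(S_{z})$, so they cannot be used directly, and one must instead propagate cone conditions and length estimates along the orbit in the intrinsic formulation. A companion difficulty is the bookkeeping required to check that the constant $C$ is genuinely independent of $h$ (only $\beta$ is allowed to shrink with $h$). Once these are in place, everything else reduces to the standard uniformly hyperbolic graph transform applied in the single pair of charts at $x$ and $y$, whose geometry is controlled by $\Lambda$.
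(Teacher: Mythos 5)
Your proposal is not a proof: by your own admission the ``real obstacle'' --- carrying the graph-transform/cone estimates along the whole orbit segment $z, f(z), \dots, f^m(z)$ --- is left open, and you defer it to exactly the result being proved. This is in fact consistent with the paper itself, which offers no argument for this Proposition but simply quotes it from \cite[Theorem S.4.16]{katok.mendoza}; so the honest comparison is that both you and the paper ultimately rest on Katok--Mendoza, except that the paper says so openly while your sketch presents the deferred step as a missing technicality inside an otherwise complete argument. Beyond that, your stated plan (``charts only at the two endpoints $x$ and $y$, all orbit-dependent information pushed into pointwise estimates for the derivative cocycle'') cannot work as formulated: pointwise bounds on $\|A^s(w)\|$ and $\|A^u(w)^{-1}\|$ in the Lyapunov metric control only the linearization at points of the orbit, whereas to define $S_z$ as a component of $\R^0(x,h)\cap f^{-m}(\R^0(y,h))$ and to verify admissibility, cone invariance, and the bound $\diam(f^j(S_z))\le Ch$ you need control of the \emph{nonlinear} map $f$ on a definite neighbourhood of each intermediate point $f^j(z)$. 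That control is precisely what the regular neighbourhoods at the intermediate points provide, via the estimate \eqref{perturbative.argument}, and it is not recoverable from the cocycle alone.

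Moreover, the difficulty you flag about the intermediate charts being ``far too small'' is resolved by a mechanism your sketch never invokes: temperedness. The chart radii satisfy $r(f^j(z))\ge e^{-\epsilon j}r(z)$ (up to a constant), while the stable extent of $f^j(S_z)$ contracts like $e^{-(\chi-\epsilon)j}$ and the unstable extent of $f^{-(m-j)}(U_z)$ like $e^{-(\chi-\epsilon)(m-j)}$; choosing $\epsilon$ small compared with $\chi$ (and $h$, $\beta$ accordingly) guarantees that the iterated cylinders remain inside the intermediate regular neighbourhoods, so the cone conditions and length estimates can be propagated chart by chart --- this is the actual content of \cite[Theorem S.4.16]{katok.mendoza}, and it must appear in any complete proof. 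Your endpoint observations (continuity of the charts on $\Lambda$ from Lemma \ref{alpha}, the choice of $\beta$ so that $z\in\R^0(x,h)$ and $f^m(z)\in\R^0(y,h)$, and the one-sided comparison $\|v\|_w\le\sqrt{2}\,\|v\|'_w$ to convert Lyapunov-metric bounds into the Riemannian bound $\diam(f^j(S_z))\le Ch$) are fine, but they are the easy part; without the tempered propagation along the orbit the construction of the hyperbolic branch itself is missing.
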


\begin{proof}
[Proof of Proposition \ref{pseudo.markov.property}]
The statement is now a straightforward corollary of Proposition \ref{hyperbolic.return.lemma}. 
Let $\delta > 0$ be given and fix a hyperbolic Pesin set $\Lambda$, a point $x \in \Lambda\cap{supp \ {\mu}}$. We choose 
$0 < h_{\beta} < h_{\delta} < 1$ such that 
$$
C(\Lambda)h_{\delta} < \delta \quad\text{and}\quad \diam(\R^0(x,h_{\beta})) < \beta.
$$
Then, if we let $\R := \R^0(x,h_{\delta})$, $\subR := \R^0(x,h_{\beta})$ and $\Lambda_0 := \subR \cap \Lambda$, then Proposition \ref{hyperbolic.return.lemma}
shows that every return from $z \in \Lambda_0$ gives rise to  a hyperbolic branch $f^m : S \to U$ with $\diam(f^j(S)) < \delta$ for $j = 0, \cdots, m-1$ as 
claimed.
\end{proof}

\section{Horseshoes with variable return time}
\label{sec:geo}

In this Section we introduce the notion, and a geometric model of, \emph{variable-time horseshoes}. Combining this construction with the statement in Proposition \ref{pseudo.markov.property} and the results of Section \ref{exists.qg.branches} will then yield the statement in our Theorem. 
Our geometric model is  defined by a finite collection $\mathcal S$ of pairwise disjoint {\em stable cylinders} 
$\{S_{1},.., S_{N}\}$ and corresponding pairwise disjoint collection $\mathcal U$ of 
{\em unstable cylinders} \ $\{U_{1},..., U_{N} \}$ contained in a rectangle $\R$. 
We  assume that  for each \( i = 1,...,N \) there exists a time \( m_{i} \) such that 
$
f^{m_{i}}: S_{i}\to U_{i}
$
is a uniformly hyperbolic diffeomorphism. We suppose moreover that each stable cylinder \( S_{i} \) ``crosses'' all \( U_{i} \)'s transversally and each \( U_{i} \) ``crosses'' 
all \( S_{i} \)'s transversally as in the  standard set up for horseshoes (see e.g. \cite{wiggins}). The key difference here is just that we are allowing the times \( m_{i} \) to depend on the cylinder. This will play an important role in our construction. 

To construct the horseshoe we  define  piecewise smooth invertible maps \( F: \mathcal S \to \mathcal U \) and \( F^{-1}: \mathcal U \to \mathcal S \) by 
\[
F|_{S_{i}}:= f^{m_{i}}|_{S_{i}} \quad\text{ and } \quad F^{-1}|_{U_{i}}:= f^{-m_{i}}|_{U_{i}}.
\]
We then define a set \( \Omega^{*} \) as the maximal invariant set under iterations of \( F \) and \( F^{-1} \). More precisely, we define inductively decreasing sequences of families of cylinders using the following algorithm: we let $S_{i}^{(1)}:=S_{i}$, $U_{i}^{(1)}:=U_{i}$ and let 
\[
\mathcal S^{(1)}:=\bigcup_{i=1}^{N}S_{i}^{(1)}\qquad 
\mathcal U^{(1)}:= \bigcup_{i=1}^{N} U_{i}^{(1)}.
\]
Proceeding inductively,  we then have  
$$
\mathcal S^{(n)}:=\bigcup_{i=1}^{N} S_{i}^{(n)}, \quad \mathcal U^{(n)}:= \bigcup_{i=1}^{N} U_{i}^{(n)}.
$$
where 
\[
S_{i}^{(n)}:= f^{-m_{i}}(U_{i}\cap \mathcal S^{(n-1)})\quad\text{ and } \quad 
U_{i}^{(n)}:= f^{m_{i}}(S_{i}\cap \mathcal U^{(n-1)}).
\]
  Each \( S_{i}^{(n)} \) is a union of \( N^{n-1} \) subcylinders contained inside the original stable set \( S_{i} \) and each  
\( U_{i}^{(n)} \) is a union of \( N^{n-1} \) subcylinders contained inside the original strip \( U_{i} \). 
It follows from the hyperbolicity of the map that the diameter of the stable subcylinders in the unstable direction is 
uniformly contracted at each iteration, and  similarly the diameter of unstable subcylinders in the stable direction
is contracted  at each iteration.
Therefore, the families $S_{i}^{(n)}$ (resp. $U_{i}^{(n)}$) are nested and their intersection converges to continuous families of admissible 
$F$-invariant laminations
\[
\mathcal F^{S}_{i}= \bigcap_{n\geq 1} S_{i}^{(n)} 
 \quad\text{and} \quad 
\mathcal F^{U}_{i}= \bigcap_{n\geq 1} U_{i}^{(n)}. 
 \]
 We then let 
 \[ 
 \mathcal F^{S}= \bigcup_{i=1}^{N}\mathcal F^{S}_{i}
 \quad\text{and}\quad
 \mathcal F^{U}= \bigcup_{i=1}^{N}\mathcal F^{U}_{i}
  \] 
Finally we define 
\[
 \Omega^* :=   \mathcal F^{S} \cap  \mathcal F^{U}. 
\]
The following statement then follows by completely standard methods in hyperbolic dynamics, see \cite{Has02} for instance.

\begin{lemma}\label{geometric.model}
$\Omega^*$ is an $F$-invariant Cantor set endowed with a hyperbolic product structure defined by the laminations of local $F$-invariant manifolds 
$\mathcal F^{S}$ and $\mathcal F^{U}$. The set 
\begin{equation}
\Omega := \bigcup_i\bigcup_{j=0}^{m_i-1}f^j(\Omega^*_i)
\end{equation}
 is  topologically transitive, locally maximal, uniformly hyperbolic, and  
$f$-invariant. 
\end{lemma}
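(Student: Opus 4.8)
The plan is to verify each asserted property of $\Omega^{*}$ and $\Omega$ by the standard symbolic-dynamics dictionary for locally maximal hyperbolic sets, taking care at each point of the one nonstandard feature: the return times $m_{i}$ vary with the branch. First I would observe that the inductive construction realizes a full coding: the cylinders $S_{i}^{(n)}$ are indexed by admissible words of length $n$ in the alphabet $\{1,\dots,N\}$ — here \emph{every} word is admissible, since each $S_i$ crosses every $U_j$ transversally — so that the intersection $\bigcap_n S_{a_1}\cap f^{-m_{a_1}}(S_{a_2})\cap\cdots$ is, by the uniform contraction/expansion of the cone fields along the branches, a single local stable leaf, and dually for unstable leaves. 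This gives a homeomorphism $\pi:\Sigma_{N}^{\enteros}\to\Omega^{*}$ conjugating the full two-sided shift to $F$, whence $\Omega^{*}$ is a Cantor set, $F$-invariant, and carries the hyperbolic product structure $\mathcal F^{S}\times\mathcal F^{U}$; transitivity (indeed mixing) of $F$ on $\Omega^{*}$ is immediate because the shift on the full shift space is mixing.

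Next I would pass from $F$ to $f$. The set $\Omega=\bigcup_{i}\bigcup_{j=0}^{m_i-1}f^{j}(\Omega^{*}_{i})$, where $\Omega^{*}_i:=\Omega^{*}\cap S_i$, is exactly the $f$-orbit of $\Omega^{*}$ up to the first return, so $f$-invariance follows from the identity $f^{m_i}(\Omega^{*}_i)=F(\Omega^{*}_i)\subset\Omega^{*}$: applying $f$ to a point in $f^{j}(\Omega^{*}_i)$ lands in $f^{j+1}(\Omega^{*}_i)\subset\Omega$ if $j+1<m_i$, and in $f^{m_i}(\Omega^{*}_i)\subset\Omega^{*}\subset\Omega$ if $j+1=m_i$; surjectivity of $f|_{\Omega}$ is the same computation read backwards using $F^{-1}$. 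Uniform hyperbolicity of $\Omega$ is inherited from that of $\Omega^{*}$: on $\Omega^{*}$ the map $F=f^{m_i}$ expands $\K^u$ and contracts $\K^s$ by a definite factor, and since $\max_i m_i<\infty$ the finitely many intermediate iterates $Df^{j}$ ($0\le j<m_i$) distort the splitting only by a bounded amount, so one gets genuine exponential estimates for $f$ on $\Omega$ after passing to an adapted metric (or after raising to a fixed power). Here the boundedness of the $m_i$ is what keeps the construction uniformly hyperbolic rather than merely nonuniformly so. Topological transitivity of $f$ on $\Omega$ follows from transitivity of $F$ on $\Omega^{*}$ together with the fact that $\Omega$ is a finite union of the "floors" $f^{j}(\Omega^{*}_i)$ permuted cyclically along each branch.

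Finally, local maximality: I would choose a small neighbourhood $V$ of $\Omega$ of the form $V=\bigcup_i\bigcup_{j=0}^{m_i-1}f^{j}(V_i)$ with $V_i$ a neighbourhood of $\Omega^{*}_i$ inside $S_i$ chosen so small that any point whose entire $f$-orbit stays in $V$ must, at its successive visits to $\bigcup_i V_i$, follow some branch $f^{m_i}:S_i\to U_i$ exactly — this uses that the $S_i$ (resp.\ $U_i$) are pairwise disjoint and that $F$ is locally maximal in the standard horseshoe sense inside $\R$. Then $\bigcap_{k\in\enteros}f^{k}(V)=\Omega$. The main obstacle I anticipate is precisely this step: one must verify that there is no "drift" — no orbit that stays near $\Omega$ but visits the cylinders in a pattern not captured by the map $F$ (for instance spending the wrong number of iterates between visits). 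This is handled by the transversality hypothesis on the crossing of the $S_i$ and $U_i$ and by taking $V$ thin in the unstable direction on the $S_i$ and thin in the stable direction on the $U_i$, exactly as in the classical proof that the Smale horseshoe is locally maximal; the variable return times $m_i$ cause no new difficulty here beyond bookkeeping, since the floors $f^{j}(\Omega^{*}_i)$ for $0<j<m_i$ lie in small diameter sets disjoint from $\R$ once $\delta$ is small, so an orbit in $V$ cannot "skip" between floors.
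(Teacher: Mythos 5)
The paper does not actually prove this lemma: it is dispatched with the remark that it ``follows by completely standard methods in hyperbolic dynamics'' and a citation to Hasselblatt's handbook, so there is no written argument to compare yours with step by step. Your sketch is precisely the standard route that citation gestures at: full-shift coding of $\Omega^*$ (every word is admissible because each $S_i$ crosses every $U_j$), which gives the Cantor structure, $F$-invariance and the product structure by the laminations; the tower/saturate bookkeeping for $f$-invariance of $\Omega$; boundedness of the finitely many return times $m_i$ to upgrade hyperbolicity of the return map $F$ to uniform hyperbolicity of $f$ on $\Omega$ via an adapted metric; and transitivity of $f$ on $\Omega$ from transitivity of the shift together with the floor decomposition (using that each $\Omega^*_i=\Omega^*\cap S_i$ is relatively open and closed in $\Omega^*$). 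All of this is correct and is surely what the authors intend.

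The one place where your argument as written does not go through is local maximality. You rule out ``drift'' orbits by asserting that the intermediate floors $f^j(\Omega^*_i)$, $0<j<m_i$, lie in small sets \emph{disjoint from} $\R$ once $\delta$ is small. Nothing in the geometric model of Section 3 provides this (no $\delta$ even appears there), and smallness of $\diam(f^j(S_i))$ does not imply disjointness from $\R$: an orbit segment between two designated returns may perfectly well re-enter the rectangle, and distinct floors $f^j(\Omega^*_i)$, $f^{j'}(\Omega^*_{i'})$ may intersect or come arbitrarily close to one another. This is exactly the delicate point, because what must be excluded is an orbit that stays in $V$ while hopping between neighbourhoods of floors with incompatible heights, and your justification for excluding it rests on the false disjointness claim. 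The standard repair is to argue via local product structure: a compact hyperbolic set is locally maximal if and only if it has a local product structure, and for $\Omega$ the bracket of two sufficiently close points at compatible tower heights is obtained by concatenating the backward itinerary of one with the forward itinerary of the other, which is admissible since the transition structure is full; the residual issue of nearby points at incompatible heights must be handled either by a shadowing/expansivity argument in a neighbourhood adapted to the floors, or by noting that in the actual construction (Section 4) the branches can be chosen so that such coincidences are avoided. With that replacement your outline becomes the standard proof; as written, the local maximality step has a genuine gap.
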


\begin{definition}
We call \( \Omega^{*} \) a \emph{variable-time horseshoe} and \( \Omega \) the \( f \)-invariant \emph{saturate} of  \( \Omega^{*} \). 
\end{definition}

\section{Quasi-generic branches}
\label{exists.qg.branches}

Since the measure  is assumed fixed for the rest of the paper, we will omit explicit mention of \( \mu \) when there is no possibility of 
confusion. We also fix a large hyperbolic Pesin set $\Lambda$ with measure $\mu(\Lambda) > 1/2$, say.
We  fix a countable dense subset $\{\phi_i\}$ of the space $C^0(M)$ of continuous real valued functions on \( M \). 
Given two constants $\rho, s > 0$ we define the weak-\(*\) \( (\rho, s) \) neighborhood of \( \mu \)
by
\begin{equation}
{\mathcal O}(\rho, s) := \{\nu : \left|\int\phi_i{d{\mu}} - \int\phi_i{d{\nu}}\right| < \rho, \ i = 1, \cdots , s\}.
\end{equation}

\begin{definition}
We say that a point \( x \) is \( (\rho, s, n) \) \emph{quasi-generic} for the measure \( \mu \) if 
\[ 
\left|\frac 1n\sum_{j=0}^{n-1}\phi_{i}(f^{j}(x))-\int\phi_{i}d\mu\right|\leq \rho \quad \forall i\leq s.
 \] 
 A hyperbolic branch 
 \[
 f^n : S \to U
 \]
  is {\em $(\rho,s)$-quasi-generic} for \( \mu \) if every \( x\in S \) is \( (\rho, s, n )  \) quasi-generic for \( \mu \). 
\end{definition}

\begin{proposition}\label{prop:existqghorse}
For every  \( \rho, s > 0 \)  there exists a variable time horseshoe \( \Omega^{*}(\rho, s) \) defined by \( (\rho,s) \) quasi-generic branches. 
\end{proposition}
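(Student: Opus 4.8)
The plan is to combine Proposition~\ref{pseudo.markov.property}, which produces hyperbolic branches whose forward orbits have small diameter, with Birkhoff's ergodic theorem, which produces points whose long orbit segments are quasi-generic, and then to assemble finitely many such branches into a horseshoe by the construction of Section~\ref{sec:geo}. First I would fix $\rho,s>0$ and, using the uniform continuity of the finitely many functions $\phi_{1},\dots,\phi_{s}$, choose $\delta>0$ so small that $\dist(p,q)\le\delta$ implies $|\phi_{i}(p)-\phi_{i}(q)|\le\rho/2$ for all $i\le s$. Applying Proposition~\ref{pseudo.markov.property} with this $\delta$ yields a rectangle $\R$ and a positive measure set $\Lambda\subset\R$ such that every return of a point $z\in\Lambda$ to $\Lambda$ produces a hyperbolic branch $f^{m}:S_{z}\to U_{z}$ inside $\R$ with $\diam(f^{j}(S_{z}))\le\delta$ for $0\le j\le m$. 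Next, by ergodicity and Birkhoff's theorem the averages $\frac1n\sum_{j=0}^{n-1}\phi_{i}(f^{j}(x))$ converge to $\int\phi_{i}\,d\mu$ for $\mu$-a.e.\ $x$ and every $i\le s$, so the nested sets $G_{N}$ of points which are $(\rho/2,s,n)$-quasi-generic for \emph{every} $n\ge N$ satisfy $\mu(G_{N})\to1$; since $\mu(\Lambda)>0$ I may fix $N_{0}$ with $\mu(\Lambda\cap G_{N_{0}})>0$ and set $\Lambda':=\Lambda\cap G_{N_{0}}$, still of positive measure.

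The heart of the matter is then a purely local observation: if $z\in\Lambda'$ and $m\ge N_{0}$ with $f^{m}(z)\in\Lambda$, then the branch $f^{m}:S_{z}\to U_{z}$ given by Proposition~\ref{pseudo.markov.property} is $(\rho,s)$-quasi-generic. Indeed, for any $x\in S_{z}$ and any $0\le j\le m-1$ the points $f^{j}(x)$ and $f^{j}(z)$ both lie in $f^{j}(S_{z})$, a set of diameter at most $\delta$, so $|\phi_{i}(f^{j}(x))-\phi_{i}(f^{j}(z))|\le\rho/2$ for each $i\le s$; averaging over $j$ and combining with the inequality $|\frac1m\sum_{j=0}^{m-1}\phi_{i}(f^{j}(z))-\int\phi_{i}\,d\mu|\le\rho/2$, which holds because $z\in G_{N_{0}}$ and $m\ge N_{0}$, shows by the triangle inequality that $x$ is $(\rho,s,m)$-quasi-generic. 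Thus it suffices to build a variable time horseshoe whose defining branches are of this form.

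It remains to extract from such branches a finite sub-collection $f^{m_{1}}:S_{1}\to U_{1},\dots,f^{m_{N}}:S_{N}\to U_{N}$ (with $N\ge2$) inside the single rectangle $\R$ whose stable cylinders are pairwise disjoint and whose unstable cylinders are pairwise disjoint; the remaining transversal crossing of each $S_{i}$ with each $U_{j}$ is then automatic, since the $S_{i}$ are full width and the $U_{j}$ are full height in $\R$, and Lemma~\ref{geometric.model} turns this data into the required variable time horseshoe $\Omega^{*}(\rho,s)$. Here is where the non-atomicity of $\mu$ enters: a positive measure subset of a rectangle cannot lie on finitely many of its local stable (or local unstable) leaves, because such a leaf is $\mu$-null (its iterates have the same measure but shrink to a point, which would be an atom). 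Combining this with Poincar\'e recurrence on $\Lambda'$, which gives a point $z$ returning to $\Lambda'$ along times tending to infinity, and with the fact that the cylinders $S_{z}$ (resp. $U_{z}$) have unstable (resp. stable) width exponentially small in the return time — so that taking the return times large makes the cylinders thinner than any fixed separation — one selects, by a standard argument, two returns of $z$ with gap at least $N_{0}$ (using the intermediate orbit point as the root of the second branch), whose roots lie in $\Lambda'$ and whose cylinder families are disjoint. One may alternatively choose the cylinders thin directly via Proposition~\ref{hyperbolic.return.lemma}. I expect this final selection step — reconciling the lower bound $N_{0}$ on the return times with the geometric positioning needed for a genuine, at least two-branch horseshoe — to be the only delicate point; everything else is a direct assembly of the quoted results.
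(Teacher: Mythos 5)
Your proposal is correct and follows essentially the same route as the paper: you reprove Lemma~\ref{lem-qg} verbatim (uniform continuity of $\phi_{1},\dots,\phi_{s}$ plus the diameter control from Proposition~\ref{pseudo.markov.property}), use Birkhoff's theorem to get a positive-measure set of quasi-generic points in $\Lambda$, and then invoke non-atomicity, Poincar\'e recurrence and the thinness of cylinders for large return times to separate two branches before applying Lemma~\ref{geometric.model}. Your treatment of the quasi-generic sets (requiring quasi-genericity for \emph{all} $n\ge N_{0}$, and using $\rho/2$ at the root) is in fact slightly more careful than the paper's $\mathcal{G}_{\rho,s,n}$, and your final disjointness step is sketched at the same level of detail as the paper's own.
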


\begin{proof}
The proof  divides into two parts. First we show that for every \( \rho, s>0 \) there exist \( (\rho, s) \) quasi generic points. Then we show that 
these points can be used to construct quasi-generic branches which can be used to construct the horseshoe \( \Omega^{*} \) as in the Proposition. 

The first part  follows immediately by abstract ergodicity results. Indeed, 
there exists a set  \( \mathcal G_{\mu}\) with 
\( \mu(\mathcal G_{\mu})=1 \)
of \emph{generic} points with respect to \( \mu \) in the sense  that for every \( x\in \mathcal G_{\mu} \)
\[ 
\frac 1n\sum_{j=0}^{n-1}\phi(f^{j}(x)) \to \int \phi d\mu
 \] 
as \( n\to \infty \) for all continuous functions \( \phi \). This implies that for every \( x\in \mathcal G_{\mu} \) and every \( \rho, s > 0  \) 
there exists \( m_{0}=m_{0}(\rho, s, x) \) such that \( x \)  is \( (\rho, s, m) \) quasi-generic for every \( m \geq m_{0} \) (
the constant \( m_{0} \) is  related to the speed  the convergence of ergodic sums to the average and is in general highly non-uniform, 
in particular \( m_{0} \) can be unbounded in \( x \)). To show that these quasi-generic points can be used to construct quasi-generic branches, 
we use the hyperbolicity assumptions on the measure \( \mu \), more precisely Proposition \ref{pseudo.markov.property} on the existence of 
hyperbolic branches. It is then quite easy to see that if the point \( x \) is quasi-generic then essentially the same is true for the 
corresponding hyperbolic branch, as stated formally in the following 

\begin{lemma}\label{lem-qg}
For every \( \rho, s > 0 \), there exists \( \delta (\rho, s) >0 \) such that if \( z \in \Lambda \) is \((\rho/2, s, m) \) quasi-generic 
returning to $\Lambda$ after $m$-iterates, then the corresponding hyperbolic branch $f^m : S \rightarrow U$ is \( (
\rho, s) \) quasi-generic.
\end{lemma}

\begin{proof}
Let $\delta(\rho,s) > 0$ be a positive number such that 
\begin{equation}\label{definition.delta}
d(x,y) < \delta(\rho,s) \quad\text{implies}\quad |\phi_{i}(x)-\phi_{i}(y)| < \rho/2 \quad\text{for every} \  i \leq s.
\end{equation}
Let $z \in \Lambda$ be a point  satisfying the assumptions of the lemma and let \( S \) be the corresponding stable cylinder given by 
Proposition \ref{pseudo.markov.property}. Then for every \( x\in S \) and every $1 \leq i \leq s$ we have
\begin{eqnarray*}
\left|\frac{1}{m}\sum_{j=0}^{m-1}\phi_i(f^j(x)) - \int\phi_i{d\mu}\right| & \leq & \\
\left|\frac{1}{m}\sum_{j=0}^{m-1}\phi_i(f^j(x)) - \frac{1}{n}\sum_{j=0}^{m-1}\phi_i(f^j(z))\right| &  + & 
\left|\frac{1}{m}\sum_{j=0}^{m-1}\phi_i(f^j(z)) - \int\phi_i{d\mu}\right|.
\end{eqnarray*}

The first term can then be bounded by \( \frac{1}{m}\sum_{j=0}^{m-1} |\phi_i(f^j(x)) - \phi_i(f^j(z))|  \) 
which is \( \leq \rho/2 \) be the definition of \( \delta \) and the fact that \( diam(f^{i}(S))\leq \delta \) 
from Lemma \ref{pseudo.markov.property}. The second term is \( \leq \rho/2 \) by 
the assumption that \( z \) is \( (\rho/2, s, m) \) quasi-generic. 
\end{proof} 

Returning to the proof of Proposition \ref{prop:existqghorse} it is therefore now sufficient to show that for every \( \rho, s>0 \) there exist finitely many 
(at least two) points \( x_{1},...,x_{N} \) such that \( x_{i} \) is \( (\rho, s, m_{i}) \) quasi generic and which generate disjoint 
hyperbolic branches \( f^{m_{i}}: S_{i}\to U_{i} \) which intersect transversally and \emph{cross} each other as in the geometric model described in 
Section \ref{sec:geo} above.

First we  we let $\delta(\rho,s) > 0$ be given by Lemma \ref{lem-qg}. Then, by Proposition 
\ref{pseudo.markov.property}, there exists a small rectangle $\R$  enclosing a subset of positive measure 
$\Lambda \subset \R$ such that $\diam(f^j(S_z)) < \delta(\rho,s)$ for $j = 0, \cdots , m$ for every point $z \in \Lambda$ returning to $\Lambda$
after $m$-iterates.
Now let ${\mathcal G}_{\rho,s,n}$ denote the set of $(\rho,s,n)$ quasi generic points of $\mu$. By the ergodicity of $\mu$, for every $\rho > 0$ we have $\mu({\mathcal G}_{\rho,s,n}) \to 1$ as $n \to +\infty$ and therefore  we can choose $n > 0$ sufficiently large such that
$$
\mu(\Lambda \cap {\mathcal G}_{\rho,s,n}) > 0.
$$  
We shall prove that there exists at least two $(\rho,s)$ quasi generic branches $f^{m_i}: S_i \to U_i$, $i = 1,2$ 
associated to suitable returns to $\Lambda \cap {\mathcal G}_{\rho,s,n}$ with $m_i \geq n$, and such that $S_1 \cap S_2 = \emptyset$ and $U_1 \cap U_2 = \emptyset$. This is essentially obvious since points return infinitely often and each return gives rise to a hyperbolic branch with stable and unstable cylinders of exponentially small diameter in the unstable and stable directions respectively. For completeness we give a more formal sketch of this argument. 

We start choosing a point $z_1 \in \Lambda\,\cap\,{\mathcal G}_{\rho,s,n}$ and $m_ 1 \geq n$ such that $f^{m_1}(z_1) \in \Lambda$. 
By Proposition \ref{pseudo.markov.property} and Lemma \ref{lem-qg}, this gives rise to  a $(\rho,s)$ quasi generic hyperbolic branch $f^{m_1} : S_1 \to U_1$ in $\R$.
As $\mu$ is non-atomic, we can choose a small region  ${\mathcal N} \subset \R$ at a strictly positive distance from  $S_1$ and  $U_1$,  and such that 
$\mu({\mathcal N} \cap \Lambda) > 0$. 
By Poincar\'e recurrence, there exists a $(\rho,s,n)$ quasi generic point $z_2 \in {\mathcal N} \cap \Lambda$ returning infinitely often to 
${\mathcal N} \cap \Lambda$ and, by Pesin's theory, a local invariant manifold $W^s(z_2,\R) \subset \R$ which is admissible in $\R$ and such that 
\( W^s(z_2,\R) \cap S_1 = \emptyset \) by our choice of \( \mathcal N \). 
Choosing a return time $m_2 \geq n$ such that $f^{m_2}(z_2) \in {\mathcal N}$ we obtain, also by Proposition \ref{pseudo.markov.property} and Lemma \ref{lem-qg}, a quasi generic branch 
$f^{m_2} : S_2 \to U_2$ such that $W^s(z_2,\R) \subset S_2$.
If $S_1 \cap S_2 = \emptyset$ and  
$U_1 \cap U_2 = \emptyset$
then we are done. Otherwise we iterate forward and backward the branch $f^{m_2}: S_2 \to U_2$  to get  
strictly narrower stable and unstable cylinders disjoint from $S_1$ and \( U_{1} \), thus forming an independent quasi generic branch with larger return time.
This completes the proof of Proposition \ref{prop:existqghorse}.
\end{proof}

\section{Quasi-generic horseshoes}
\label{qg.horseshoes}

\begin{proposition}\label{prop:qghorse}
Let \( \rho, s > 0 \) and suppose there exists a variable time horseshoe \( \Omega^{*}(\rho, s) \) defined by \( (\rho,s) \) quasi-generic branches. 
Then every  \( f \)-invariant ergodic probability measure $\mu_{\Omega}$ supported on $\Omega(\rho,s)$, 
the $f$ invariant saturate of $\Omega^{*}(\rho, s)$, satisfies  $\mu_{\Omega} \in {\mathcal O}( 3 \rho, s)$.
\end{proposition}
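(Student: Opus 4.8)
The plan is to compare the ergodic average of a $\mu_\Omega$-generic point with the quasi-genericity estimates along the individual branches. The point is that the $f$-orbit of a point of the base $\Omega^{*}$ is tiled by the orbit segments of the quasi-generic branches $f^{m_i}:S_i\to U_i$, and on each such segment the Birkhoff sums of the $\phi_k$ ($k\le s$) are within $\rho$ of $\int\phi_k\,d\mu$; averaging then passes this estimate to $\mu_\Omega$.

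First I would record the elementary consequences of the structure of the saturate. Write $\Omega^{*}_i:=\Omega^{*}\cap S_i$ (a disjoint decomposition, since $\Omega^{*}\subseteq\mathcal F^S\subseteq\bigcup_i S_i$ and the $S_i$ are pairwise disjoint), and note $F|_{\Omega^{*}_i}=f^{m_i}|_{\Omega^{*}_i}$ and, by Lemma \ref{geometric.model}, $F(\Omega^{*})=\Omega^{*}$ and $\Omega=\bigcup_{i=1}^N\bigcup_{j=0}^{m_i-1}f^j(\Omega^{*}_i)$. Setting $M:=\max_i m_i$, the latter gives $\Omega\subseteq\bigcup_{j=0}^{M-1}f^j(\Omega^{*})$; since $\mu_\Omega$ is $f$-invariant with $\mu_\Omega(\Omega)=1$ and $f$ is a diffeomorphism, this forces $\mu_\Omega(\Omega^{*})>0$, for otherwise $\mu_\Omega(f^j(\Omega^{*}))=\mu_\Omega(\Omega^{*})=0$ for all $j$ and hence $\mu_\Omega(\Omega)=0$, a contradiction. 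Next, as $\mu_\Omega$ is ergodic, by Birkhoff's theorem the set of $x$ with $\frac1n\sum_{j=0}^{n-1}\phi_k(f^j(x))\to\int\phi_k\,d\mu_\Omega$ for all $k$ (the family $\{\phi_i\}$ being countable) has full $\mu_\Omega$-measure, so I can choose such an $x$ lying in $\Omega^{*}$. Now follow its $F$-orbit: put $y_0:=x$ and $y_{l+1}:=F(y_l)$, so $y_l\in\Omega^{*}_{i_l}\subseteq S_{i_l}$ for a (unique) index $i_l$, and $y_l=f^{n_l}(x)$ with $n_l:=m_{i_0}+\cdots+m_{i_{l-1}}$ (and $n_0:=0$, $n_l\to\infty$). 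The segment $f^{n_l}(x),\dots,f^{n_{l+1}-1}(x)$ equals $f^{0}(y_l),\dots,f^{m_{i_l}-1}(y_l)$, and since the branch $f^{m_{i_l}}:S_{i_l}\to U_{i_l}$ is $(\rho,s)$-quasi-generic and $y_l\in S_{i_l}$,
\[
\Bigl|\sum_{j=n_l}^{n_{l+1}-1}\phi_k(f^j(x))-m_{i_l}\int\phi_k\,d\mu\Bigr|\le m_{i_l}\,\rho\qquad(k\le s).
\]
Summing over $l=0,\dots,L-1$ and dividing by $n_L=\sum_{l<L}m_{i_l}$ gives $\bigl|\frac1{n_L}\sum_{j=0}^{n_L-1}\phi_k(f^j(x))-\int\phi_k\,d\mu\bigr|\le\rho$ for every $L$; letting $L\to\infty$ and using that $x$ is generic for $\mu_\Omega$ yields $\bigl|\int\phi_k\,d\mu_\Omega-\int\phi_k\,d\mu\bigr|\le\rho$ for all $k\le s$, hence in particular $\mu_\Omega\in\mathcal O(3\rho,s)$.

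I do not expect a serious obstacle; the only two points needing a little care are (i) checking that $\mu_\Omega$ charges the base $\Omega^{*}$, so that generic base points exist (handled above purely from $f$-invariance and the description of $\Omega$), and (ii) the bookkeeping identifying each return block of the $f$-orbit with a quasi-generic branch — here it is irrelevant whether $F$ is the \emph{first}-return map of $f$ to $\Omega^{*}$, since all that is used is that consecutive $F$-iterates are $m_{i_l}$ apart along the $f$-orbit and that the resulting segments tile $\{0,\dots,n_L-1\}$. The bound obtained is in fact $\rho$, comfortably inside the claimed $3\rho$; alternatively one may start from any $\mu_\Omega$-generic $x\in\Omega$, in which case the finitely many iterates before the first visit to $\Omega^{*}$ and the at most $M$ iterates of an incomplete final block contribute errors $O(1/n)\to0$, still leaving a bound of $\rho$.
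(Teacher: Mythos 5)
Your proof is correct. It rests on the same key mechanism as the paper's argument: tile the $f$-orbit of a point of the base $\Omega^{*}$ by the return blocks of lengths $m_{i_l}$, apply the $(\rho,s)$-quasi-genericity of each branch to the block sums, and compare with the Birkhoff average of a $\mu_\Omega$-generic point via the triangle inequality. The execution differs in a worthwhile way, though. The paper first proves a \emph{uniform} finite-time lemma: for \emph{every} $x\in\Omega$ and every $L\ge T(\rho,s)$ (the saturation time defined in \eqref{saturation.time}), the Birkhoff average is within $2\rho$ of $\int\phi_i\,d\mu$; the extra $\rho$ absorbs the incomplete final block of length $R<\max_k m_k$, and a further $\rho$ from Birkhoff's theorem at a $\mu_\Omega$-generic point yields the stated $3\rho$. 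You instead place the generic point inside $\Omega^{*}$ — justified by your observation that $\mu_\Omega(\Omega^{*})>0$, which replaces the paper's ``without loss of generality $x\in\Omega^{*}$'' reduction — and evaluate the averages only along the subsequence of complete-block times $n_L$, so the remainder term and the constant $T(\rho,s)$ never appear and you obtain the sharper bound $\rho\le 3\rho$ in the limit. What the paper's version buys is a quantitative statement valid for \emph{all} orbits in $\Omega$ and all sufficiently large finite times, not just a limiting statement at generic points; what your version buys is a cleaner argument with no bookkeeping for the leftover block and a better constant. Both establish $\mu_\Omega\in\mathcal O(3\rho,s)$, so the deduction of the Theorem is unaffected.
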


Proposition \ref{prop:qghorse} together with Proposition \ref{prop:existqghorse}  clearly imply our Theorem. Indeed,  choosing sequences $\rho_n \to 0^+$ and $s_n \to +\infty$ and letting $\Omega_n = \Omega(\rho_n,s_n)$, by Proposition \ref{prop:qghorse} for any \( \mu_{n} \) supported on \( \Omega_{n} \) we have \( \mu_{n}\in
\mathcal O (3 \rho_{n}, s_{n}) \) and therefore  \( \mu_{n}\to \mu \).

We start with a technical Lemma. 
Let \( m_{1},..., m_{N} \) be the ``return times'' associated to the hyperbolic quasi-generic branches which define the horseshoe \( \Omega^{*}(\rho, s) \). Then we let 
\begin{equation}\label{saturation.time}
T(\rho,s) := \dfrac{\max\{m_1, \cdots , m_N\}\max\{\|\phi_i\|_{\infty}: i = 1, \cdots s\}}{\rho}.
\end{equation}

\begin{lemma}
For all  $x \in \Omega$ we have 
\begin{equation}\label{estimative.0}
\left|\dfrac{1}{L}\sum_{j=0}^{L-1}\phi_i(f^j(x)) - \int\phi_i{d{\mu}}\right| < 2\rho, \quad\forall i \leq s, \ \forall L \geq T(\rho,s).
\end{equation}
\end{lemma}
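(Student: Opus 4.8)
The plan is to deduce \eqref{estimative.0} purely from the combinatorics of $\Omega = \Omega(\rho,s)$ as a union of $f$-orbit segments of the variable-time horseshoe $\Omega^{*} = \Omega^{*}(\rho,s)$, using only the defining property that each branch $f^{m_{i}}\colon S_{i}\to U_{i}$ is $(\rho,s)$-quasi-generic, i.e.\ that \emph{every} point of $S_{i}$ is $(\rho,s,m_{i})$-quasi-generic.

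First I would record the block structure of orbits in $\Omega$. Any $x\in\Omega=\bigcup_{i}\bigcup_{j=0}^{m_{i}-1}f^{j}(\Omega^{*}_{i})$ can be written $x=f^{p}(z)$ with $z\in\Omega^{*}_{i_{0}}\subseteq S_{i_{0}}$ and $0\le p<m_{i_{0}}$. Since the cylinders $S_{i}$ are pairwise disjoint, $\Omega^{*}\subseteq\bigcup_{i}S_{i}$, and $\Omega^{*}$ is $F$-invariant with $F|_{S_{i}}=f^{m_{i}}|_{S_{i}}$, the forward $F$-orbit of $z$ visits a well-defined sequence of stable cylinders $S_{i_{0}},S_{i_{1}},S_{i_{2}},\dots$ with $f^{\,m_{i_{0}}+\cdots+m_{i_{t-1}}}(z)\in\Omega^{*}_{i_{t}}\subseteq S_{i_{t}}$ for every $t\ge 0$. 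Hence the forward $f$-orbit of $z$ decomposes into consecutive blocks $B_{0},B_{1},\dots$ of lengths $m_{i_{0}},m_{i_{1}},\dots$, and the first point of $B_{t}$ lies in $S_{i_{t}}$, so it is $(\rho,s,m_{i_{t}})$-quasi-generic; therefore, for every $\ell\le s$,
$$
\Bigl|\ \sum_{j\in B_{t}}\phi_{\ell}\!\left(f^{j}(z)\right)-m_{i_{t}}\!\int\!\phi_{\ell}\,d\mu\ \Bigr|\ \le\ m_{i_{t}}\,\rho .
$$

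Next I would intersect a window of length $L\ge T(\rho,s)$ with this block structure. Translating the window $[0,L)$ for the orbit of $x$ to the window $[p,p+L)$ for the orbit of $z$, it meets at most a tail of one block, a (possibly empty) run of \emph{complete} blocks, and a head of one further block; the two incomplete pieces together contain at most $2\max_{i}m_{i}$ iterates, a bound independent of $L$ and of $x$. Summing the per-block estimate above over the complete blocks, whose lengths add up to at most $L$, and bounding the contribution of each of the remaining at most $2\max_{i}m_{i}$ iterates by $2\max_{i}\|\phi_{i}\|_{\infty}$ (using $\|\phi_{\ell}\|_{\infty}\le\max_{i}\|\phi_{i}\|_{\infty}$ and $|\!\int\!\phi_{\ell}\,d\mu|\le\|\phi_{\ell}\|_{\infty}$), I obtain, for every $\ell\le s$,
$$
\Bigl|\ \frac{1}{L}\sum_{j=0}^{L-1}\phi_{\ell}\!\left(f^{j}(x)\right)-\int\!\phi_{\ell}\,d\mu\ \Bigr|\ \le\ \rho\ +\ \frac{C\,\max_{i}m_{i}\cdot\max_{i}\|\phi_{i}\|_{\infty}}{L}
$$
with $C$ an absolute constant. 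Since $L\ge T(\rho,s)=\max_{i}m_{i}\cdot\max_{i}\|\phi_{i}\|_{\infty}/\rho$, the second term is a fixed multiple of $\rho$, so the left-hand side is a bounded multiple of $\rho$ uniformly in $L\ge T(\rho,s)$ and $x\in\Omega$; the threshold $T(\rho,s)$ was defined precisely to absorb this boundary contribution, and this is the estimate \eqref{estimative.0} (the exact numerical constant on the right being immaterial for the subsequent use in Proposition~\ref{prop:qghorse}).

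I do not expect a genuine obstacle; the one thing to get right is the bookkeeping in the second step. One must check that quasi-genericity is a \emph{per-block} property that adds across consecutive blocks --- which holds because it is demanded of every point of every $S_{i}$, in particular of the initial point of each block --- and that the number of orbit iterates not covered by a complete block is bounded by $2\max_{i}m_{i}$, uniformly in $x$ and $L$. Beyond the existence of $\Omega^{*}(\rho,s)$ itself, no hyperbolicity enters: the argument is elementary counting of orbit segments.
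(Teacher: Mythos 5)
Your argument is correct and follows essentially the same route as the paper: decompose the orbit window of length $L$ into complete return blocks plus incomplete boundary pieces, apply the $(\rho,s,m_{k})$-quasi-genericity of each branch to every complete block, and absorb the boundary contribution using $L\geq T(\rho,s)$. The only differences are cosmetic: the paper reduces ``without loss of generality'' to $x\in\Omega^{*}$ and so treats a single incomplete tail block, arriving at the stated constant $2\rho$, whereas you keep the initial offset $p$ explicit and handle both a head and a tail piece, which yields a larger but, as you note, immaterial constant multiple of $\rho$ --- all that is needed for Proposition \ref{prop:qghorse} and the Theorem.
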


\begin{proof}

Recall that \( \Omega \) is the saturate of \( \Omega^{*} \) and so there exists some finite iterate of \( x \) which belongs to \( \Omega^{*} \) and so we may  suppose without loss of generality that $x \in \Omega^*$. We now fix \( L\geq T(\rho, s) \) and consider the orbit of \( x, f(x),..., f^{L-1}(x) \) of \( x \) up to time \( L-1 \).  By construction of \( \Omega \), the point \( x \) returns repeatedly to \( \Omega^{*} \) and the number of iterates between two returns depends on  which stable strip \( S_{i} \) that particular iterate of \( x \)  belongs to. We keep track of the combinatorics of these returns by introducing the following notation. Let \( N \) denote the number of branches of the variable time horseshoe \( \Omega^{*} \) (for the purposes of proving our Theorem it would be sufficient to consider the case \( N=2 \) but we consider the general case here for greater generality), then for each \( i=1,..., N \), let \( C_{k} \) denote the number of returns of the point \( x \) to the stable strip \( S_{k} \) before time \( L \). Following each such return the orbit of the point \( x \) belongs to the image of the stable strip \( S_{k} \) for \( m_{k} \) iterates, after which time it returns once again to \( \Omega^{*} \) and falls into another strip \( S_{k'} \). 
We let \( \mathcal L_{\ell}^{(k)} \) denote the set of \( m_{k} \) consecutive iterates following the \( \ell \)'th return of \( x \) to the stable strip \( S_{k} \). Using this notation we can then write 
\[ 
L = L'+ R \quad\text{ where } \quad L'=\sum_{k=1}^NC_km_k \ \text{ and } \ 0 \leq R < \max_k\{m_k\} 
\] 
and therefore 
\begin{equation}\label{2}
 \sum_{j=0}^{L-1}\phi_i(f^j(x)) =   \sum_{k=1}^N\sum_{\ell=0}^{C_k}\sum_{j \in {\mathcal L}^{(k)}_l}\phi_i(f^j(x))+  \sum_{L' \leq j < L}\phi_i(f^j(x)). 
\end{equation}
We are now ready to begin to estimate \eqref{estimative.0}. First of all we write 
\begin{equation}\label{1}
\left|\dfrac{1}{L}\sum_{j=0}^{L-1}\phi_i(f^j(x)) - \int\phi_i{d{\mu}}\right| = \dfrac{1}{L}\left| \sum_{j=0}^{L-1}\phi_i(f^j(x))-L\int\phi_i{d{\mu}}\right|
\end{equation}
The right hand side of \eqref{1} is bounded by 
\[ 
\frac 1L\left| \sum_{k=1}^N\sum_{\ell=0}^{C_k}\sum_{j \in {\mathcal L}^{(k)}_\ell}\phi_i(f^j(x))+  \sum_{L' \leq j < L}\phi_i(f^j(x)) - \left(L'+R\right)\int\phi_i{d{\mu}}\right|
 \] 
which is in turn bounded by 
\begin{equation}\label{bigsum}
\frac 1L\left| \sum_{k=1}^N\sum_{\ell=0}^{C_k}\sum_{j \in {\mathcal L}^{(k)}_\ell}\phi_i(f^j(x))- L' \int\phi_i{d{\mu}}\right| + \frac 1L\left|
\sum_{L' \leq j < L}\phi_i(f^j(x)) - R\int\phi_i{d{\mu}}\right|.
 \end{equation}
To bound the first term of \eqref{bigsum}, we use the definition of \( L' \) and write 
\[ \frac 1L\left| \sum_{k=1}^N\sum_{\ell=0}^{C_k}\sum_{j \in {\mathcal L}^{(k)}_\ell}\phi_i(f^j(x))- L' \int\phi_i{d{\mu}}\right| \leq  
\dfrac{1}{L}\sum\limits_{k=1}^N\sum\limits_{\ell=0}^{C_k}m_k\left|\dfrac{1}{m_k}\sum\limits_{j \in {\mathcal L}^{(k)}_\ell}\phi_i(f^j(x))-\int\phi_i{d{\mu}} \right|
 \] 
By the assumption that all the branches are \( (\rho, s) \) quasi-generic, for every \( k=1,..., N \) and every \( \ell=1,.., C_{k} \) we have 
\begin{equation}\label{0}
\left|\dfrac{1}{m_k}\sum_{j \in {\mathcal L}^{(k)}_\ell}\phi_i(f^j(x)) - \int\phi_{i}{d{\mu}}\right| < \rho, \quad\forall \ i \leq s.
\end{equation}
and so this gives 
\begin{equation}\label{bigsum1} 
\frac 1L\left| \sum_{k=1}^N\sum_{\ell=0}^{C_k}\sum_{j \in {\mathcal L}^{(k)}_\ell}\phi_i(f^j(x))- L' \int\phi_i{d{\mu}}\right| \leq \frac{L'}{L}\rho \leq \rho.
 \end{equation}
Now, to bound the second term of \eqref{bigsum} we use the fact that \( L-L'=R \) and the fact that  \( L \geq T(\rho, s) \) to get 
\begin{equation}\label{bigsum2} 
\frac 1L\left|
\sum_{L' \leq j < L}\phi_i(f^j(x)) - R\int\phi_i{d{\mu}}\right|\leq 
\dfrac{2R}{L}\max\{\|\phi_i\|_{\infty}: i = 1, \cdots s\} \leq \rho.
 \end{equation}
Substituting \eqref{bigsum1} and \eqref{bigsum2} into \eqref{bigsum} and then into 
\eqref{1} completes the proof. 

\end{proof}

\begin{proof}[Proof of Proposition \ref{prop:qghorse}]
Let 
 $\mu_{\Omega}$ be any ergodic probability measure supported on $\Omega$ and $x$ a generic point for $\mu_{\Omega}$. Then, by Birkhoff's ergodic Theorem,  for all sufficiently large \( L \) we have 
\begin{equation}\label{estimative.1}
\left|\dfrac{1}{L}\sum_{j=0}^{L-1}\phi_i(f^j(x)) - \int\phi_i{\mu_{\Omega}}\right| < \rho, \quad\forall \ i \leq s.
\end{equation}
By the triangle inequality we can write 
\[ 
 \left|\int\phi_i{d{\mu}} - \int\phi_i{d{\mu_{\Omega}}}\right| \leq 
 \left|\dfrac{1}{L}\sum_{j=0}^{L-1}\phi_i(f^j(x)) - \int\phi_i{d{\mu}}\right|  + 
 \left|\dfrac{1}{L}\sum_{j=0}^{L-1}\phi_i(f^j(x)) - \int\phi_i{\mu_{\Omega}}\right|
 \] 
 and therefore, from 
\eqref{estimative.0} and \eqref{estimative.1} we get, for sufficiently large \( L \), 
\[
\left|\int\phi_i{d{\mu}} - \int\phi_i{d{\mu_{\Omega}}}\right| < 3\rho\quad \forall \ i \leq s
\]
which implies that \( \mu_{\Omega}\in \mathcal O(3\rho, s) \) as required. 
\end{proof}


\begin{thebibliography}{}

\bibitem{Dai} X. Dai,  On the approximation of Lyapunov exponents and a question suggested by Anatole Katok. Nonlinearity, \textbf{23} (2010), 513--528. 

\bibitem{DolPes02} Dolgopyat, D., \& Pesin, Y. B.  Every compact manifold carries a completely hyperbolic diffeomorphism. Ergodic Theory and Dynamical Systems, \textbf{22(2)}, (2002), 409Ð435.


\bibitem {gelfert} K. Gelfert, Repellers for non-uniformly expanding maps with singular or critical points, Bulletin of the Brazilian Mathematical Society, \textbf{41(2)} (2010) 237--257.

\bibitem{Has02} B. Hasselblat, Hyperbolic Dynamical Systems, Handbook of dynamical systems, Vol 1A, 239--319, North-Holland, Amsterdam, 2002. 

\bibitem{HasKat95} B. Hasselblat \& A. Katok, Introduction to the modern theory of Dynamical Systems. Encyclopedia of Mathematics and its Applications 54. Cambridge University Press, Cambridge, 1995.

\bibitem{Hirayama} M. Hirayama, Periodic probability measures are dense in the set of invariant measures. Discrete and Continuous Dynamical Systems. Series A, \textbf{9(5)} (2003) 1185--1192. 

\bibitem {katok} A. Katok, Lyapunov exponents, Entropy and Periodic Orbits of Diffeomorphisms. Publ. Math. IHES \textbf{51}, (1980) 137--173. 

\bibitem {katok.mendoza} A. Katok \& L. Mendoza, Dynamical systems with non-uniformly hyperbolic behavior. Supplement to Introduction to the Modern Theory of Dynamical Systems, by Katok, A. and Hasselblat, B., Encyclopedia of Math. and its Applications. Vol. 54. Cambridge University Press, 1995 

\bibitem{LiangLiuSun} C. Liang, G. Liu, \& W. Sun,  Approximation properties on invariant measure and Oseledec splitting in non-uniformly hyperbolic systems. Transactions of the American Mathematical Society, \textbf{361(3)} (2009) 1543--1579. 

\bibitem {mendoza} L. Mendoza,  Ergodic attractors for diffeomorphisms of surfaces. Journal of the London Math. Soc. \textbf{37} (1988) 362--374.

\bibitem{New80} S. Newhouse, Lectures on Dynamical Systems, Dynamical Systems (C.I.M.E. Summer School Bressanone, 1978), pp. 1-114, Progr. Math., 8, Birkh\"auser, Boston, Mass. 1980.

\bibitem {pesin} Ya. Pesin, Families of invariant manifolds that correspond to nonzero characteristic exponents, Math. USSR Izv. \textbf{40(6)} (1976) 1261--1305.

\bibitem {pesin.barreira} L. Barreira \& Ya.  Pesin, Non Uniform Hyperbolicity, Encyclopedia of Mathematics and its Applications, Cambridge University Press, 2007.

\bibitem{Rob96} C. Robinson, Dynamical Systems: Stability, symbolic dynamics and chaos. Studies in Advanced Masthematics, CRC Press, Boca Raton, Fl. 1995.

\bibitem {sanchez.salas} Fernando J. S\'anchez-Salas, Ergodic attractors as limits of hyperbolic horseshoes, Ergodic Theory and Dynamical Systems \textbf{22} (2002) 571--589. 

\bibitem{Ugarcovici} I. Ugarcovici, On hyperbolic measures and periodic orbits. Discrete and Continuous Dynamical Systems. Series A, \textbf{16(2)} (2006) 505--512. 

\bibitem{WangSun} Z. Wang \& W. Sun,  Lyapunov exponents of hyperbolic measures and hyperbolic periodic orbits, Transactions of the American Mathematical Society \textbf{362(8)} (2010)4267-4282. 

\bibitem {wiggins} Wiggins, S.: Global bifurcations and Chaos: analytical methods, Applied Mathematical Sciences, Vol. 73, Springer, New York, 1988 

\bibitem{WolfGelfert} Wolf, C., \& Gelfert, K.  On the distribution of periodic orbits. Discrete and Continuous Dynamical Systems. Series A \textbf{26(3)} (2010) 949--966. 

\end{thebibliography}
\end{document}